\newtheorem{theorem}{Theorem}[section]
\newtheorem{remark}{Remark}[section]
\newtheorem{corollary}{Corollary}[section]
\newtheorem{lemma}{Lemma}[section]
\numberwithin{equation}{section}
\begin{document}

\title{on the divisor problem with congruence conditions}

\author{Lirui Jia}
\address{Department Of Mathematical Sciences, Tsinghua University,
Beijing 100084, People's Republic of China }

\email{jialr@tsinghua.edu.cn}

\author{Wenguang Zhai}
\address{Department of Mathematics, China University of Mining and Thechnology, Beijing 100083, People's Republic of China }

\email{zhaiwg@hotmail.com}

\author{Tianxin Cai}
\address{School of Mathematical Sciences, Zhejiang University,
Hangzhou 310027, People's Republic of China }

\email{txcai@zju.edu.cn}

\thanks{
The first author is supported by the National Natural Science Foundation of China (Grant No. 11871295 and Grant No. 11571303), China
Postdoctoral Science Foundation (Grant No. 2018M631434).  The second author is supported by the National Key Basic Research Program of China (Grant No. 2013CB834201). The third author is supported by the National Natural Science Foundation of China (Grant No. 11571303).}
\date{}{}

\subjclass[2010]{11N37, 11P21}
\keywords{Divisor problem, sign
change, congruence conditions. }

\begin{abstract}
Let $d(n; r_1, q_1, r_2, q_2)$ be the number of factorization $n=n_1n_2$ satisfying $n_i\equiv r_i\pmod{q_i}$ ($i=1,2$) and $\Delta(x; r_1, q_1, r_2, q_2)$ be the error term of  the summatory function of $d(n; r_1, q_1, r_2, q_2)$ with $x\geq (q_1q_2)^{1+\varepsilon}, 1\leq r_i\leq q_i$, and $(r_i, q_i)=1$ ($i=1, 2$). We study the power moments and sign changes of $\Delta(x; r_1, q_1, r_2, q_2)$, and prove that for a  sufficiently large constant $C$, $\Delta(q_1q_2x; r_1, q_1, r_2, q_2)$ changes sign in the interval $[T,T+C\sqrt{T}]$ for any large $T$. Meanwhile, we show that for a small constant $c'$,  there exist infinitely many
subintervals of length $c'\sqrt{T}\log^{-7}T$ in $[T,2T]$ where $\pm \Delta(q_1q_2x; r_1, q_1, r_2, q_2)> c_5x^\frac{1}{4}$ always holds.
\end{abstract}

\maketitle


\section{Introduction}

\subsection{Dirichlet divisor problem}

Let $d(n)$ be the Dirichlet divisor function, $D(x)=\mathop{{\sum}}\limits_{n\leq x}d(n)=\mathop{{\sum}}\limits_{n_1n_2\leq x}1$ be the summatory function. In 1849, Dirichlet proved that
$$D(x)=x\log x+(2\gamma-1)x+O(\sqrt{x}),$$
where $\gamma$ is the Euler constant.

 Let
$$\Delta(x)=D(x)-x\log x-(2\gamma-1)x$$ be the error term in the asymptotic formula for $D(x)$. Dirichlet's divisor problem consists of determining the smallest $\alpha$, for which $\Delta(x)\ll x^{\alpha+\varepsilon}$ holds for any $\varepsilon>0$. Clearly, Dirichlet's result implies that $\alpha\leq\frac{1}{2}$. Since then, there are many improvements on this estimate.
 The best to-date is given by
Huxley\cite{Huxley03,huxley2005exponential}, reads
\begin{equation}\label{huxley}
\Delta(x)\ll x^\frac{131}{416}\log^\frac{26947}{8320}x.
\end{equation}
It is widely conjectured that $\alpha=\frac{1}{4}$ is admissible and is the best possible.

Since $\Delta(x)$ exhibits considerable fluctuations, one natural way to study the upper bounds is to consider the moments.

In 1904, Voronoi \cite{voronoi1904fonction} showed that
\begin{equation*}
  \int_1^T\Delta(x)dx=\frac{T}{4}+O(T^{\frac{3}{4}}).
\end{equation*}
Later, in 1922 Cram\'er\cite{cramer1922zwei} proved the mean square formula
\begin{equation*}
   \int_1^T\Delta(x)^2dx=cT^{\frac{3}{2}}+O(T^{\frac{5}{4}+\varepsilon}),\quad\forall~\varepsilon>0,
\end{equation*}
where $c$ is a positive constant. In 1983, Ivic \cite{ivic1983large} used the method of large values to prove that
\begin{equation}\label{ivic}
    \int_1^T|\Delta(x)|^Adx\ll T^{1+\frac{A}{4}+\varepsilon},\quad\forall~\varepsilon>0
\end{equation}
for each fixed $0\leq A\leq\frac{35}{4}$. The range of $A$ can be extended to $\frac{262}{27}$ by the estimate \eqref{huxley}.
In 1992, Tsang\cite{tsang1992higher}  obtained the asymptotic formula
\begin{equation}\label{01}
  \int_1^T\Delta(x)^kdx=c_kT^{1+\frac{k}{4}}+O(T^{1+\frac{k}{4}-\delta_k}),\quad \text{for}\ k=3,4,
\end{equation}
with positive constants $c_3$, $c_4$, and $\delta_3=\frac{1}{14}$, $\delta_4=\frac{1}{23}$. Ivi\'c and Sargos~\cite{ivic2007higher} improved the values $\delta_3$, $\delta_4$ to $\delta'_3=\frac{7}{20}$, $\delta'_4=\frac{1}{12}$, respectively.
Heath-Brown\cite{heath1992distribution} in 1992 proved that for any positive real number $k<A$, where $A$ satisfies \eqref{ivic}, the limit
$$c_k=\lim_{X\rightarrow\infty}X^{-1-\frac{k}{4}}\int_1^X\Delta(x)^kdx$$
exists.  Then, there followed a series
of investigations on explicit asymptotic formula of the type \eqref{01} for larger values of $k$. In 2004, Zhai \cite{zhai04} established asymptotic formulas for $3\leq k\leq9$.

 At the beginning of the 20th  century, Voronoi\cite{voronoi1904fonction} proved the remarkable exact formula that
  \begin{equation*}
    \Delta(x)=-\frac{2}{\pi}\sqrt{x}\sum_{n=1}^\infty\frac{d(n)}{\sqrt{n}}\big(K_1(4\pi\sqrt{nx})+ \frac{\pi}{2}Y_1(4\pi\sqrt{nx})\big),
  \end{equation*}
  where $K_1$, $Y_1$ are the Bessel functions, and the series on the right-hand side is boundedly convergent for $x$ lying in each fixed closed interval.

Heath-Brown and Tsang \cite{heathbrown1994sign} studied the sign changes of $\Delta(x)$. They proved that for a suitable constant $C > 0$, $\Delta(x)$ changes sign on the interval $[T,T + C \sqrt{T}]$ for every sufficiently large $T$. Here the length $\sqrt{T}$ is almost best possible since they proved that in the interval $[T,2T]$ there are many subintervals of length $\gg\sqrt{T}\log^{-5}T$ such that $\Delta(x)$ does not change sign in any of these subintervals.

\subsection{The divisor problem with congruence conditions}

A divisor function with congruence conditions is defined by
\begin{equation*}
  d(n; r_1, q_1, r_2, q_2) =\mathop{{\sum}}_{\begin{subarray}{c}  n=n_1n_2\\n_i\equiv r_i\!\!\!\!\!\pmod{q_i}\\i=1, 2\end{subarray}}{1},
  \end{equation*}
 of which, the summatory function is
 \begin{equation*}
  D(x; r_1, q_1, r_2, q_2) =\mathop{{\sum}}_{\begin{subarray}{c}  n_1n_2\leq x\\n_i\equiv r_i\!\!\!\!\!\pmod{q_i}\\i=1, 2\end{subarray}}{1}.
\end{equation*}

From~Richert \cite{Richert}, we can find that for $x\geq q_1q_2$, $1\leq r_i\leq q_i$ ($i=1, 2$)
\begin{multline}\label{4}
   D(x; r_1, q_1, r_2, q_2)\\
   = \frac{x}{q_1q_2}\log\Big(\frac{x}{q_1q_2}\Big) - \bigg(\frac{\Gamma'}{\Gamma}\Big(\frac{r_1}{q_1}\Big) + \frac{\Gamma'}{\Gamma}\Big(\frac{r_2}{q_2}\Big) +1\bigg)\frac{x}{q_1q_2}+\Delta(x; r_1, q_1, r_2, q_2).
\end{multline}

From Huxley's estimates\cite{Huxley03}, it follows that
\begin{equation}\label{9a}
  \Delta(x; r_1, q_1, r_2, q_2)\ll\Big(\frac{x}{q_1q_2}\Big)^\frac{131}{416}\Big(\log\Big(\frac{x}{q_1q_2}\Big)\Big)^\frac{26947}{8320}
\end{equation}
uniformly in $1\leq r_1\leq q_1\leq x, 1\leq r_2\leq q_2\leq x$. It is conjectured that
\begin{equation}\label{conjecture}
\Delta(x; r_1, q_1, r_2, q_2)\ll\Big(\frac{x}{q_1q_2}\Big)^{\frac{1}{4}+\varepsilon}
\end{equation}
uniformly in $1\leq r_1\leq q_1\leq x, 1\leq r_2\leq q_2\leq x$, $\forall\varepsilon>0$, which is an analogue of the well-known conjecture that $\Delta(x)\ll x^{\frac{1}{4}+\varepsilon}$.

M\"{u}ller and Nowak\cite{MullerNowak} studied the mean value of $\Delta(x; r_1, q_1, r_2, q_2)$. They pointed out
\begin{equation}\label{muller}
  \int_1^T{\Delta(x; r_1, q_1, r_2, q_2)}dx=\big(\frac{r_1}{q_1}-\frac{1}{2}\big)\big(\frac{r_2}{q_2}-\frac{1}{2}\big)T+O\big( (q_1q_2)^{\frac{1}{4}}T^{\frac{3}{4}}\big),
\end{equation}
and
\begin{equation}\label{muller2}
 \int_1^T{\Delta^2(x; r_1, q_1, r_2, q_2)}dx=c_2(q_1q_2)^{\frac{1}{2}}T^\frac{3}{2}+o\big((q_1q_2)^{\frac{1}{2}}T^\frac{3}{2}\big),
\end{equation}
uniformly in $1\leq r_i\leq q_i\leq T$ $(i=1, 2)$, if $T$ is a large number, and $c_2$ is a constant.

In \cite{Jiazhai2017}, we show that
\begin{equation}\label{pre}
    \int_1^T|\Delta(q_1q_2x; r_1, q_1, r_2, q_2)|^Adx\ll T^{1+\frac{A}{4}}\mathcal{L}^{4A},
\end{equation}
for $0\leq A\leq\frac{262}{27}$ and $T\gg(q_1q_2)^\varepsilon$.

Here we study $\Delta(x; r_1, q_1, r_2, q_2)$ further and give some more results about it.

\textsc{Notations}. For a real number $t$, let $[t]$ be the largest integer no greater than $t$, $\{t\}=t-[t]$, $\psi(t)=\{t\}-\frac{1}{2}$, $\parallel t\parallel=\min(\{t\}$, $1-\{t\})$, $e(t)=e^{2\pi it}$. $\mathbb{C}$, $\mathbb{R}$, $\mathbb{Z}$, $\mathbb{N}$ denote the set of complex numbers, of real numbers, of integers, and of natural numbers, respectively; $f\asymp g$ means that both $f\ll g$ and $f\gg g$ hold. Throughout this paper, $\varepsilon$ denote sufficiently small positive constants, and $\mathcal{L}$ denotes $\log T$.

\vspace{1ex}

\section{Main results}\label{sec:results}

In this paper, we will first discuss the power moments of $\Delta(x; r_1, q_1, r_2, q_2)$ and get the following
\begin{theorem}\label{th:moments}
If $T\gg (q_1q_2)^{\varepsilon}$ is large enough. If $A_{0}>9$ satisfies
\begin{equation*}
  \int_1^{T}{|\Delta(q_1q_2x; r_1, q_1, r_2, q_2)|^{A_0}}dx\ll T^{1+\frac{A_0}{4}+\varepsilon},
\end{equation*}
then for any fixed integer $3\leq k<A_0$, we have
\begin{equation}\label{sk}
  \int_1^{T}{\Delta^k(q_1q_2x; r_1, q_1, r_2, q_2)}dx
  =C_k\int_1^Tx^{\frac{k}{4}}dx+o\big(T^{1+\frac{k}{4}}\big),
\end{equation}
where $C_k\asymp 1$ are  explicit constants.
\end{theorem}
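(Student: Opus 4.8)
\textit{Proof proposal.} The plan is to run the method used by Tsang and Zhai for the classical $\Delta(x)$, adapted to the twisted error term and made uniform in $q_1,q_2$. Abbreviate $\mathcal D(x)=\Delta(q_1q_2x;r_1,q_1,r_2,q_2)$. The first step is to isolate, from the truncated Voronoi-type identity for $\mathcal D$ underlying \eqref{pre} (obtainable by the method of \cite{MullerNowak,Jiazhai2017}), a short main part and a tail: for a parameter $1\le N\le T$ write $\mathcal D(x)=R_N(x)+E_N(x)$ on $[1,T]$, where
\[
R_N(x)=\frac{x^{1/4}}{\pi\sqrt2}\sum_{n\le N}\frac{c(n)}{n^{3/4}}\cos\!\Big(4\pi\sqrt{nx}-\frac{\pi}{4}+\vartheta_n\Big),
\]
the coefficients $c(n)=c(n;r_1,q_1,r_2,q_2)$ satisfying $c(n)\ll_\varepsilon n^\varepsilon$ and $\sum_{n\ge1}|c(n)|^2 n^{-3/2}\asymp1$ uniformly in $1\le r_i\le q_i$, and the tail obeying
\[
\int_1^T E_N(x)^2\,dx\ll \frac{T^{3/2}}{N^{\delta_0}}\,\mathcal L^{O(1)},\qquad \int_1^T|E_N(x)|^{A}\,dx\ll T^{1+A/4+\varepsilon}\quad(0\le A\le A_0),
\]
uniformly once $T\gg(q_1q_2)^\varepsilon$; the higher-moment bound for $E_N$ follows from the hypothesis together with a large-values estimate for $R_N$, exactly as in the proof of \eqref{pre} in \cite{Jiazhai2017}. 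I would record these as preliminary lemmas, paying attention to the uniformity in the parameters.

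The heart of the matter is the evaluation of $\int_1^T R_N(x)^k\,dx$. Raising the sum to the $k$-th power and linearising each product of cosines produces a sum over $k$-tuples $(n_1,\dots,n_k)\in[1,N]^k$ and signs $(\epsilon_1,\dots,\epsilon_k)\in\{\pm1\}^k$, weighted by $\prod_i c(n_i)(n_1\cdots n_k)^{-3/4}$, of integrals of the form $\int_1^T x^{k/4}e\!\big(2\sqrt x\,(\epsilon_1\sqrt{n_1}+\cdots+\epsilon_k\sqrt{n_k})\big)\,dx$. When $\epsilon_1\sqrt{n_1}+\cdots+\epsilon_k\sqrt{n_k}=0$ the integral is $\int_1^T x^{k/4}\,dx$ up to a fixed phase; summing these contributions over all such $(n_i,\epsilon_i)$ gives $C_k\int_1^T x^{k/4}\,dx$, where $C_k$ is the associated absolutely convergent weighted count. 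Absolute convergence, and $C_k\asymp1$ with uniformity in $r_i,q_i$, are obtained as in \cite{tsang1992higher,zhai04}: the "fully paired" solutions have vanishing residual phase and already contribute a definite-sign quantity of size $\asymp1$, while the remaining solutions — including the ternary ones forced when $k$ is odd — contribute a convergent amount controlled by the standard counting estimates for $\epsilon_1\sqrt{n_1}+\cdots+\epsilon_k\sqrt{n_k}=0$. The terms with $\epsilon_1\sqrt{n_1}+\cdots+\epsilon_k\sqrt{n_k}\ne0$ are off-diagonal: iterating conjugates shows $|\epsilon_1\sqrt{n_1}+\cdots+\epsilon_k\sqrt{n_k}|\gg N^{-C_k'}$ for a constant $C_k'=C_k'(k)$, so the first-derivative test bounds their total by $\ll T^{1/2+k/4}N^{C_k''}$; choosing $N=T^\eta$ with $\eta=\eta(k)>0$ small enough makes this $o(T^{1+k/4})$. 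Hence $\int_1^T R_N(x)^k\,dx=C_k\int_1^T x^{k/4}\,dx+o(T^{1+k/4})$.

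It remains to replace $R_N$ by $\mathcal D$. From $\mathcal D=R_N+E_N$ one has $\big|\mathcal D^k-R_N^k\big|\ll |E_N|\,|\mathcal D|^{k-1}+|E_N|^k$, so it suffices to show $\int_1^T|E_N|^{j}|\mathcal D|^{k-j}\,dx=o(T^{1+k/4})$ for $1\le j\le k$. Applying Hölder with the exponent pair chosen so that the $\mathcal D$-factor is raised to the power $A_0$, and estimating the $E_N$-factor by interpolating its $L^p$-norm ($p<A_0$, which is exactly where $k<A_0$ is used) between the mean square — which supplies the genuine saving $N^{-\delta_0}$ — and the $A_0$-th moment, the exponents of $T$ collapse to precisely $1+k/4$ while the power of $N$ saved outweighs the residual $T^\varepsilon$; with $N=T^\eta$ this makes each such term $o(T^{1+k/4})$. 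Combining with the previous paragraph yields \eqref{sk}.

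I expect the main obstacle to be the simultaneous calibration of $N$: the off-diagonal part of $R_N^k$ grows with $N$ while the truncation tail $E_N$ only becomes negligible as $N\to\infty$, and the cross terms additionally demand the high-moment input — reconciling all three constraints is what pins down the range $3\le k<A_0$ (so, with the value $A_0=\tfrac{262}{27}$ from \eqref{pre}, $3\le k\le9$), and keeping every implied constant independent of $r_1,q_1,r_2,q_2$ throughout the Voronoi analysis adds a further layer of bookkeeping. A secondary delicate point is the uniform lower bound $C_k\gg1$, for which one must check that the fully-paired contribution dominates uniformly in the modulus-dependent phases $\vartheta_n$, again following the sign analysis of \cite{tsang1992higher,zhai04}.
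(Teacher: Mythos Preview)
Your proposal is correct and follows essentially the same route as the paper: the paper's entire proof of this theorem consists of invoking the Voronoi-type decomposition of Lemma~\ref{lem:vor} and then saying ``with the approach of Liu \cite{LiuKui}'', which is precisely the Tsang--Zhai moment machinery you have sketched (split off a short main sum, expand the $k$-th power, isolate the diagonal $\sum\epsilon_i\sqrt{n_i}=0$, bound the off-diagonal by the first-derivative test plus the spacing of square roots, and control the cross terms with the tail via H\"older against the $A_0$-th moment hypothesis). The only cosmetic discrepancy is that the actual formula \eqref{R_0} carries, for each $n$, a sum over factorizations $n=hl$ with \emph{distinct} phases $\theta(h,l)=2\pi(hr_2/q_2+lr_1/q_1)$ rather than a single amplitude--phase pair $(c(n),\vartheta_n)$; this does not affect the argument but does mean the constant $C_k$ is naturally expressed as a multiple sum over factorizations, which is the form one needs when verifying $C_k\asymp1$ uniformly in $r_i,q_i$.
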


From \eqref{pre}, we can take $A_0=\frac{262}{27}$, which means
\begin{corollary}\label{cor:1}
If $T, r_i$ and $q_i (i=1,2)$ satisfying the hypothesis of Theorem \ref{th:moments}, then \eqref{sk} holds for any fixed integer $3\leq k\leq9$.
\end{corollary}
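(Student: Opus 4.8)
The plan is to show that the mean-value estimate \eqref{pre} furnishes an admissible value $A_0$ in Theorem \ref{th:moments}, and that the integers $k$ to which the theorem then applies are precisely $3,\dots,9$. First I would take $A=A_0=\frac{262}{27}$, which is the right endpoint of the range $0\le A\le\frac{262}{27}$ permitted in \eqref{pre}, and for which the standing assumption $T\gg(q_1q_2)^\varepsilon$ of \eqref{pre} coincides with the one in Theorem \ref{th:moments}. This gives
\begin{equation*}
  \int_1^{T}{|\Delta(q_1q_2x; r_1, q_1, r_2, q_2)|^{A_0}}dx\ll T^{1+\frac{A_0}{4}}\mathcal{L}^{4A_0}.
\end{equation*}

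Next I would absorb the power of the logarithm into a small power of $T$. Because $A_0=\frac{262}{27}$ is a fixed constant, $\mathcal{L}^{4A_0}=(\log T)^{4A_0}\ll T^\varepsilon$ for every $\varepsilon>0$, so the bound above upgrades to
\begin{equation*}
  \int_1^{T}{|\Delta(q_1q_2x; r_1, q_1, r_2, q_2)|^{A_0}}dx\ll T^{1+\frac{A_0}{4}+\varepsilon},
\end{equation*}
which is exactly the hypothesis placed on $A_0$ in Theorem \ref{th:moments}.

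Finally I would verify the numerics. Since $\frac{262}{27}=9.70\ldots$, the chosen $A_0$ satisfies $A_0>9$, so Theorem \ref{th:moments} is applicable and yields \eqref{sk} for every fixed integer $k$ with $3\le k<A_0$. The integers in the half-open interval $\big[3,\tfrac{262}{27}\big)$ are exactly $3,4,5,6,7,8,9$, which establishes \eqref{sk} throughout $3\le k\le 9$. The deduction is essentially bookkeeping rather than a substantive argument; the only points warranting care are the routine absorption of $\mathcal{L}^{4A_0}$ into $T^\varepsilon$ and the fact that the strict inequality $\frac{262}{27}>9$ (not merely $\ge 9$) is what allows the endpoint $k=9$ to be included.
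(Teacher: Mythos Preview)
Your argument is correct and is essentially the paper's own one-line derivation: the authors simply note that \eqref{pre} allows $A_0=\tfrac{262}{27}$ in Theorem~\ref{th:moments}, and your write-up merely makes explicit the routine absorption $\mathcal{L}^{4A_0}\ll T^\varepsilon$ and the check that $\tfrac{262}{27}>9$.
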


By using the estimates above, we can get the sign changes of $\Delta(x; r_1, q_1, r_2, q_2)$ as following

\begin{theorem}\label{th:change}
 Let $c_1 > 0$ be a sufficiently small constant and $c_2 > 0$ be a sufficiently
large constant, $q_1\geq 2$, $q_2\geq 3$, $1\leq r_i\leq q_i$ and $(r_i, q_i)=1$ $(i=1, 2)$. For any real-valued function $|f(t)|\leq c_1 t^\frac{1}{4}$, the
function $\Delta(q_1q_2t; r_1, q_1, r_2, q_2)+f(t)$  changes sign at least once in the interval $[T,T + c_2 \sqrt{T}]$ for every sufficiently large $T\gg (q_1q_2)^{\varepsilon}$. In particular, there exist $t_1$, $t_2\in [T,T +c_2 \sqrt{T}]$ such that $\Delta(q_1q_2t_1; r_1, q_1, r_2, q_2)\geq c_1 t_1^\frac{1}{4}$ and $\Delta(q_1q_2t_2; r_1, q_1, r_2, q_2)\leq -c_1 t_2^\frac{1}{4}$.
\end{theorem}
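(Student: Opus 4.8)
The plan is to mimic the Heath-Brown–Tsang strategy for the classical $\Delta(x)$, but carrying the congruence parameters through uniformly. The starting point is the truncated Voronoi-type expansion for $\Delta(q_1q_2x; r_1,q_1,r_2,q_2)$: one writes it (up to a small error) as a short trigonometric sum
\begin{equation*}
\Delta(q_1q_2x; r_1,q_1,r_2,q_2)=\frac{x^{1/4}}{\pi\sqrt 2}\sum_{n\le N}\frac{d(n;r_1,q_1,r_2,q_2)}{n^{3/4}}\cos\bigl(4\pi\sqrt{nx}+\text{phase}\bigr)+E_N(x),
\end{equation*}
where the phase depends on $r_i/q_i$ through the $\Gamma'/\Gamma$ terms in \eqref{4}, and $E_N(x)$ is controlled on average by the moment bound \eqref{pre} (this is exactly why the hypothesis $A_0>9$, i.e.\ Corollary~\ref{cor:1}, is invoked). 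The leading term $n=1$ contributes $\asymp x^{1/4}\cos(4\pi\sqrt x+\theta)$, which by itself oscillates with amplitude $\gg x^{1/4}$ and changes sign on every interval of length $\asymp\sqrt T$; the whole point is to show the remaining terms and $f(t)$ cannot conspire to kill this oscillation on $[T,T+c_2\sqrt T]$.

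The key mechanism, following Heath-Brown–Tsang, is a \emph{second-moment versus first-moment} comparison on short intervals. First I would establish, uniformly in the congruence data for $T\gg(q_1q_2)^\varepsilon$, the short-interval mean square asymptotic
\begin{equation*}
\int_T^{T+V}\Delta^2(q_1q_2x;r_1,q_1,r_2,q_2)\,dx\gg V\,T^{1/2}
\end{equation*}
for $V=c_2\sqrt T$ with $c_2$ large (this follows from \eqref{muller2} applied at $T$ and $T+V$, after the scaling $x\mapsto q_1q_2x$ has removed the $(q_1q_2)^{1/2}$ factor, together with the moment bound \eqref{pre} to bound the $O$-terms — one must check the error terms in \eqref{muller2} are genuinely $o$ of the main term in the \emph{short} window, which may require a dyadic decomposition or an appeal to the $k=2$ case of the almost-periodic structure). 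Simultaneously I would bound the contribution of $f$ trivially by $c_1^2\int_T^{T+V}t^{1/2}dt\le c_1^2 V(2T)^{1/2}$, which is smaller than the main term once $c_1$ is small. Then, if $\Delta(q_1q_2t;\dots)+f(t)$ had constant sign on $[T,T+V]$, combining the lower bound for $\int(\Delta+f)^2$ with an upper bound for $\big|\int(\Delta+f)\big|$ — the latter coming from \eqref{muller} (again after scaling and differencing at the two endpoints, so the linear main term $(\tfrac{r_1}{q_1}-\tfrac12)(\tfrac{r_2}{q_2}-\tfrac12)T$ cancels up to $O(V)+O((q_1q_2)^{1/4}T^{3/4})$) — would force $\int_T^{T+V}|\Delta+f|$ to be both $\gg VT^{1/4}$ (Cauchy–Schwarz with the $L^2$ lower bound and an $L^4$ or $L^{A_0}$ upper bound from \eqref{pre}) and $\ll VT^{1/4}/c_2 + o(VT^{1/4})$, a contradiction once $c_2$ is chosen large enough. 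The ``in particular'' clause then follows by noting that on the interval we may pick $t_1$ where $\Delta(q_1q_2t_1;\dots)+f(t_1)>0$ and shrinking $c_1$ slightly so that $\Delta(q_1q_2t_1;\dots)\ge c_1 t_1^{1/4}$ survives; apply the theorem once with $f$ and once with $-f$.

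The main obstacle will be securing all of these estimates \emph{uniformly in $q_1,q_2$ and $r_1,r_2$} in the short window $[T,T+c_2\sqrt T]$ rather than on $[1,T]$. The long-interval moments \eqref{pre}, \eqref{muller}, \eqref{muller2} are stated for $[1,T]$; extracting a clean short-interval lower bound for the second moment with the \emph{correct} constant and an error term that is $o(VT^{1/2})$ with the dependence on $q_1q_2$ made explicit is the delicate part — one likely has to redo the Voronoi-expansion computation for $\int_T^{T+V}\Delta^2$, using that the off-diagonal terms $n\neq m$ contribute $o(VT^{1/2})$ because the stationary phase $\sqrt n-\sqrt m$ is bounded away from $0$, while the diagonal gives $c\,V T^{1/2}\sum_{n\le N} d(n;r_1,q_1,r_2,q_2)^2 n^{-3/2}$, which is $\asymp VT^{1/2}$ uniformly (the Dirichlet series $\sum d(n;\mathbf r,\mathbf q)^2 n^{-s}$ has the same abscissa of convergence and comparable residue behaviour as $\sum d(n)^2 n^{-s}$, up to constants depending only mildly on $q_1q_2$, absorbed by $T\gg(q_1q_2)^\varepsilon$). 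Once that uniform short-interval $L^2$ lower bound is in hand, the rest is the standard Heath-Brown–Tsang dichotomy.
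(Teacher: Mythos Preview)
Your opening paragraph is on the right track: the Voronoi expansion and the dominance of the $n=1$ term are indeed the heart of the matter. But you then take a wrong turn. The mechanism you describe as ``the Heath--Brown--Tsang strategy'' --- a short-interval $L^2$ lower bound played against an $L^1$ upper bound coming from \eqref{muller} --- is \emph{not} how Heath--Brown and Tsang prove the sign-change theorem, and it is not what the paper does either. The paper's argument (its Lemma~\ref{lem:change}) instead convolves $\Delta^{**}(t):=\sqrt{2}\pi\,t^{-1/2}\bigl(\Delta(q_1q_2t^2;\ldots)+f(t^2)\bigr)$ against the \emph{non-negative} Fej\'er-type kernel
\[
K_\zeta(u)=(1-|u|)\bigl(1+\zeta\sin(4\pi\alpha u)\bigr),\qquad |u|\le 1,\ \zeta=\pm1,
\]
and shows by a direct Fourier computation that
\[
\int_{-1}^{1}\Delta^{**}(t+\alpha u)K_\zeta(u)\,du=-\tfrac{\zeta}{2}\sin\!\Bigl(4\pi t-2\pi\bigl(\tfrac{r_1}{q_1}+\tfrac{r_2}{q_2}+\tfrac18\bigr)\Bigr)+O(\alpha^{-2})+O(c_1)+O(t^{-1/2}\mathcal L^{3}).
\]
Because $K_\zeta\ge 0$, a fixed sign of $\Delta^{**}$ on $[t-\alpha,t+\alpha]$ would force a fixed sign on the left; but the right-hand side can be made $\approx\pm\tfrac12$ by choosing $t$ and $\zeta$, once $\alpha$ is a large constant and $c_1$ is small. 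This is a one-step argument requiring no short-interval moment bounds at all.

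Your route, by contrast, has real gaps. To extract $\int_T^{T+V}|\Delta+f|\gg VT^{1/4}$ via H\"older you need not only a short-interval $L^2$ \emph{lower} bound (which you flag) but also a short-interval $L^4$ \emph{upper} bound $\int_T^{T+V}|\Delta|^4\ll VT$, which does not follow from \eqref{pre}; using the long-interval bound $\int_1^{2T}|\Delta|^4\ll T^2\mathcal L^{16}$ instead collapses the lower bound to $T^{1/2}\mathcal L^{-8}$, far short of $T^{3/4}$. Even granting both short-interval moments, the first-moment input from \eqref{muller} gives $\bigl|\int_T^{T+V}\Delta\bigr|=O(T^{3/4})$ with an \emph{absolute} implied constant, which is exactly the same order as your target lower bound $c_2T^{3/4}$; the hoped-for contradiction then rests on chasing constants through several auxiliary estimates, and you have not shown those constants cooperate. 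The kernel method bypasses all of this: the convolution isolates the $n=1$ mode cleanly and the positivity of $K_\zeta$ converts oscillation directly into a sign change.
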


\begin{theorem}\label{th:maintain}
There exist three positive absolute constants $c_3$ ,$c_4$ ,$c_5 $ such that, for
any large parameter $T\gg (q_1q_2)^{\varepsilon}$, and any choice of $\pm$ signs, there are at least $c_3 \sqrt{T} log^{7}T$ disjoint subintervals of length $c_4 \sqrt{T} log^{-7}T$ in $[T,2T]$, such that $\pm \Delta(q_1q_2t; r_1, q_1, r_2, q_2)> c_5t^\frac{1}{4}$, whenever $t$ lies in any of these
subintervals. Moreover, we have the estimate
\begin{equation*}
    meas\big\{t\in[T,2T]:\pm \Delta(q_1q_2t; r_1, q_1, r_2, q_2)> c_5 t^\frac{1}{4}\big\}\gg T.
\end{equation*}
\end{theorem}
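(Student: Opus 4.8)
The plan is to adapt the well-known $\Omega$-type argument of Heath-Brown and Tsang for $\Delta(x)$ to the congruence setting, using the moment asymptotics of Theorem \ref{th:moments} (equivalently Corollary \ref{cor:1}) as the engine. Set $E(t):=\Delta(q_1q_2t;r_1,q_1,r_2,q_2)\,t^{-1/4}$, which by \eqref{muller2} and Theorem \ref{th:moments} has a bounded, non-degenerate mean square over $[T,2T]$ and bounded higher moments (the $k=3,4$ moments being the crucial non-vanishing quantities). The first step is to show that on a positive-proportion subset of $[T,2T]$ one has $|E(t)|\gg 1$: this follows from a Cauchy--Schwarz / Hölder argument comparing the second moment to a high moment, exactly as in \eqref{ivic}-type reasoning, giving $\mathrm{meas}\{t\in[T,2T]:|E(t)|>c_5\}\gg T$ for a suitable small $c_5>0$. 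The second step upgrades this to a \emph{one-sided} statement: since by Tsang's method the odd moment $\int\Delta^3\sim C_3\int x^{3/4}$ with $C_3\asymp 1>0$ (and similarly the relevant cubic moment here), $E(t)$ is positively biased, so the large positive values cannot be swamped by the large negative values; combining the $|E|\gg1$ set with the sign of the third moment yields $\mathrm{meas}\{t\in[T,2T]:+E(t)>c_5\}\gg T$, and the $-$ sign case follows by the symmetry $r_i\mapsto q_i-r_i$ (which essentially negates the main error term contributions, cf.\ the factor $(r_1/q_1-1/2)(r_2/q_2-1/2)$ in \eqref{muller}) or by a parallel argument using that the negative part also has positive measure once the positive part does, via the second-moment lower bound.

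The third step is to pass from a large-measure set to \emph{many disjoint subintervals} of length $\asymp \sqrt{T}\log^{-7}T$ on which $E(t)>c_5$ throughout. For this I need two ingredients: (a) the large-measure set $\mathcal{S}_\pm:=\{t\in[T,2T]:\pm E(t)>2c_5\}$ has measure $\gg T$; and (b) a quantitative continuity estimate for $\Delta(q_1q_2t;\dots)$ so that once $\pm E(t_0)>2c_5$ at one point, $\pm E(t)>c_5$ persists on a whole interval of length $\asymp\sqrt{T}\log^{-7}T$ around $t_0$. Ingredient (b) is the technical heart: one uses the truncated Voronoï-type formula for $\Delta(q_1q_2t;r_1,q_1,r_2,q_2)$ (available from \eqref{4} together with the classical treatment of such error terms, e.g.\ via Richert's work or the Vaaler-type expansion of $\psi$), writing $\Delta \approx$ a finite trigonometric sum of length $\asymp T$ with amplitudes $\asymp T^{1/4}$; the derivative of this sum is $\ll T^{3/4+\varepsilon}$, which would only give persistence on intervals of length $T^{-1/4-\varepsilon}$ — far too short. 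Instead, as in Heath-Brown--Tsang, one must argue that the short trigonometric sum is itself \emph{smooth} on the scale $\sqrt{T}\log^{-A}T$ in an $L^\infty$ sense off a small exceptional set, controlling the oscillation by a Lipschitz/second-moment bound on $\Delta(q_1q_2(t+h))-\Delta(q_1q_2 t)$ averaged over $t$, for $h\asymp\sqrt{T}\log^{-7}T$; the logarithmic power $7$ (rather than the $5$ that appears for the classical $\Delta(x)$) is the price of the extra logarithmic factors $\mathcal{L}^{4A}$ in the moment bound \eqref{pre}. Since the total measure of the exceptional set where persistence fails is $o(T)$, a greedy/covering argument then extracts $\gg \sqrt{T}\log^{7}T$ disjoint good subintervals, and the final measure estimate $\mathrm{meas}\{t:\pm E(t)>c_5\}\gg T$ is simply a restatement of step two. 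The main obstacle, as indicated, is ingredient (b): making the oscillation control of $\Delta(q_1q_2t;r_1,q_1,r_2,q_2)$ uniform in $q_1,q_2$ (for $T\gg(q_1q_2)^\varepsilon$) while keeping the length of the persistence interval as long as $\sqrt{T}\log^{-7}T$; this requires carefully tracking the dependence on $q_1q_2$ through the truncated Voronoï expansion and the attendant mean-value estimates, which is where the hypothesis $x\ge(q_1q_2)^{1+\varepsilon}$ and the normalization by $q_1q_2$ inside $\Delta(q_1q_2t;\dots)$ are exploited.
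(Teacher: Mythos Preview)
Your overall architecture---Heath-Brown--Tsang via moment bounds plus an $L^2$ oscillation estimate for $\Delta^*(q_1q_2(t+h))-\Delta^*(q_1q_2t)$---matches the paper's. In particular your ``ingredient (b)'' is exactly what the paper proves as Lemma~\ref{lem:meanvalue} and Lemma~\ref{lem:4}, and the extraction of subintervals is done in the paper via the $\omega(t)=P(t)^2-4\max_{h\le H_0}(P(t+h)-P(t))^2-Q(t)^2$ device (with $P=\Delta^*_\pm$, $Q=\delta t^{1/4}$), which is a tidier implementation of your greedy/exceptional-set sketch.

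The genuine gap is your Step~2, the one-sided argument. Neither of your proposed mechanisms works. First, the sign of the cubic moment $C_3$ helps at most with one choice of sign, and even then it does not by itself give $\mathrm{meas}\{E>c_5\}\gg T$: a positive $\int\Delta^{*3}$ is compatible with $\Delta^*$ being large and positive on a tiny set. Second, the substitution $r_i\mapsto q_i-r_i$ does \emph{not} negate $\Delta^*$: in the Voronoi terms $\cos(4\pi\sqrt{nx}-2\pi(hr_2/q_2+lr_1/q_1+1/8))$ it sends the phase $hr_2/q_2+lr_1/q_1$ to its negative modulo~$1$, which is not the same as flipping the cosine's sign (the $-\pi/4$ offset obstructs any such reflection symmetry). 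Your fallback ``the negative part also has positive measure once the positive part does, via the second-moment lower bound'' is circular: the second moment alone cannot separate $\Delta^*_+$ from $\Delta^*_-$.

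The paper's fix is simple and you already have the tool in hand: use the \emph{first} moment \eqref{muller}, which gives $\int_T^{2T}\Delta^*(q_1q_2t)\,dt\ll T^{3/4}$. Combined with $\int_T^{2T}|\Delta^*(q_1q_2t)|\,dt\gg T^{5/4}$ (H\"older between the second and fourth moments from Corollary~\ref{cor:1}), this forces $\int_T^{2T}\Delta^*_\pm(q_1q_2t)\,dt\gg T^{5/4}$ for \emph{both} signs simultaneously, whence Cauchy--Schwarz gives $\int_T^{2T}\Delta^{*2}_\pm\gg T^{3/2}$. This is the content of Lemma~\ref{lem:3}, and from there the $\omega$-argument runs with $H_0=\delta\sqrt{T}\mathcal{L}^{-7}$. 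Note in particular that the third moment plays no role in the paper's proof of this theorem; only $k=2,4$ (and the signed first moment) are used.
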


We also study the $\Omega$-result of the error term in the asymptotic formula \eqref{sk} for odd $k$ by using Theorem \ref{th:maintain}. Define
\begin{equation*}
    \mathcal{F}_k\big(q_1q_2x; r_1, q_1, r_2, q_2\big):=\int_1^{T}{\Delta^k\big(q_1q_2x; r_1, q_1, r_2, q_2\big)}dx
  -C_kT^{1+\frac{k}{4}}.
\end{equation*}
We have the following
\begin{theorem}\label{th:omega}
 For any $T\gg (q_1q_2)^{\varepsilon}$, the
interval $[T, 2T]$ contains a point $X$, for which
\begin{equation*}
    \mathcal{F}_k\big(q_1q_2X; r_1, q_1, r_2, q_2\big)\gg X^{\frac{1}{2}+\frac{k}{4}}\mathcal{L}^{-7}.
\end{equation*}
\end{theorem}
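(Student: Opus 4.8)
The plan is to deduce the $\Omega$-result for $\mathcal{F}_k$ directly from the pointwise lower bounds on $|\Delta|$ furnished by Theorem~\ref{th:maintain}, together with the moment asymptotic \eqref{sk}. For odd $k$, the key observation is that the asymptotic formula $\int_1^T \Delta^k(q_1q_2x;\ldots)\,dx = C_k T^{1+k/4} + o(T^{1+k/4})$ already determines the main term, so to produce a large value of $\mathcal{F}_k(q_1q_2X;\ldots)$ at some $X \in [T,2T]$ it suffices to exhibit an $X$ at which the partial integral up to $X$ deviates from $C_k X^{1+k/4}$ by more than $X^{1/2+k/4}\mathcal{L}^{-7}$. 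First I would write $\mathcal{F}_k(q_1q_2X;\ldots) = \int_1^X \Delta^k\,dx - C_k X^{1+k/4}$ and split the range of integration at $T$, so that for $X\in[T,2T]$,
\begin{equation*}
  \mathcal{F}_k\big(q_1q_2X; r_1,q_1,r_2,q_2\big) = \mathcal{F}_k\big(q_1q_2T; r_1,q_1,r_2,q_2\big) + \int_T^X \Delta^k\,dx - C_k\big(X^{1+k/4} - T^{1+k/4}\big).
\end{equation*}

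Next I would extract a lower bound for $\int_T^X \Delta^k(q_1q_2x;\ldots)\,dx$ using Theorem~\ref{th:maintain}. Taking the $+$ sign there, we obtain $\gg \sqrt{T}\,\mathcal{L}^{7}$ disjoint subintervals $I_j\subset[T,2T]$, each of length $\gg \sqrt{T}\,\mathcal{L}^{-7}$, on which $\Delta(q_1q_2t;\ldots) > c_5 t^{1/4} \gg T^{1/4}$; hence on each such $I_j$ the integrand $\Delta^k$ is $\gg T^{k/4}$, contributing $\gg T^{k/4}\cdot T^{1/2}\mathcal{L}^{-7} = T^{1/2+k/4}\mathcal{L}^{-7}$ per interval. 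On the complementary set, however, $\Delta^k$ can be negative when $k$ is odd, so a crude sum over the $I_j$ is not immediately enough — this is the main obstacle. To get around it I would instead argue by contradiction: suppose $\mathcal{F}_k(q_1q_2X;\ldots) \ll X^{1/2+k/4}\mathcal{L}^{-7}$ with a small implied constant for \emph{all} $X\in[T,2T]$; then differencing the displayed identity between two nearby points $X_1 < X_2$ shows $\int_{X_1}^{X_2}\Delta^k\,dx$ is close to $C_k(X_2^{1+k/4}-X_1^{1+k/4})$, which for $X_2 - X_1 \asymp \sqrt{T}\mathcal{L}^{-7}$ is only $\ll T^{3/4+k/4}\mathcal{L}^{-7}$. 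Choosing $X_1,X_2$ to be the endpoints of one of the good subintervals $I_j$ from Theorem~\ref{th:maintain}, the left side is $\gg T^{1/2+k/4}\cdot(\text{length})\cdot(\text{density factor})$; by instead picking \emph{two} consecutive good intervals of the same sign and taking $X_1$ at the start of the first, $X_2$ at the end of the second, with a sign-changing gap between them controlled by Theorem~\ref{th:change}, one forces a genuine oscillation in $\mathcal{F}_k$ that exceeds $c\,T^{1/2+k/4}\mathcal{L}^{-7}$.

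A cleaner route, which I would present as the main argument, is the following. Let $X_0\in[T,2T]$ be the left endpoint of one of the disjoint subintervals $I=[X_0, X_0 + \ell]$ with $\ell \asymp \sqrt{T}\mathcal{L}^{-7}$ supplied by Theorem~\ref{th:maintain} for the $+$ sign. Then
\begin{equation*}
  \mathcal{F}_k\big(q_1q_2(X_0+\ell);\ldots\big) - \mathcal{F}_k\big(q_1q_2 X_0;\ldots\big) = \int_{X_0}^{X_0+\ell}\Delta^k(q_1q_2x;\ldots)\,dx - C_k\big((X_0+\ell)^{1+k/4} - X_0^{1+k/4}\big).
\end{equation*}
On $I$ we have $\Delta(q_1q_2x;\ldots) > c_5 x^{1/4}$, so the integral is $\geq c_5^k \int_{X_0}^{X_0+\ell} x^{k/4}\,dx \gg T^{k/4}\ell \gg T^{1/2+k/4}\mathcal{L}^{-7}$, while the subtracted term is $C_k \cdot O(T^{k/4}\ell) = O(T^{1/2+k/4}\mathcal{L}^{-7})$ with an implied constant one can make small relative to $c_5^k$ by choosing the constants $c_3,c_4$ in Theorem~\ref{th:maintain} appropriately (this requires $c_5^k$ to beat $C_k$, which is where one may need to iterate over several adjacent good intervals so that the accumulated gain $\gg m\,T^{1/2+k/4}\mathcal{L}^{-7}$ outpaces the fixed-constant loss). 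Hence the difference on the left is $\gg T^{1/2+k/4}\mathcal{L}^{-7}$ in absolute value, so at least one of $\mathcal{F}_k(q_1q_2(X_0+\ell);\ldots)$, $\mathcal{F}_k(q_1q_2 X_0;\ldots)$ is $\gg X^{1/2+k/4}\mathcal{L}^{-7}$ for the corresponding $X\in[T,2T]$, which is the assertion. The genuinely delicate point, as noted, is balancing the constant $C_k$ from the moment formula against $c_5^k$; if $C_k > c_5^k$ this single-interval argument fails and one must sum the contributions of $\asymp \mathcal{L}^{7}$ many good intervals (total gain $\gg T^{1+k/4}$, which dwarfs any error), recovering the result by a telescoping/averaging argument over $[T,2T]$ rather than over a single short interval.
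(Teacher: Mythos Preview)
Your ``cleaner route'' is structurally the same as the paper's proof: difference $\mathcal{F}_k$ across one good subinterval $I=[X_0,X_0+\ell]$ from Theorem~\ref{th:maintain}, bound $\int_I\Delta^k$ from below by $c_5^k\int_I x^{k/4}\,dx$, and subtract the main-term increment $C_k\big((X_0+\ell)^{1+k/4}-X_0^{1+k/4}\big)$. The gap is exactly the point you flag as ``genuinely delicate'': you fix the $+$ sign in Theorem~\ref{th:maintain} and then worry about whether $c_5^k$ beats $C_k(1+\tfrac{k}{4})$. Your proposed fixes do not work. Adjusting $c_3,c_4$ cannot enlarge $c_5$, and iterating over $m$ adjacent good intervals scales \emph{both} the gain $c_5^k T^{k/4}\ell$ and the loss $C_k(1+\tfrac{k}{4})T^{k/4}\ell$ by the same factor $m$, so nothing is won. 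The vaguer telescoping/averaging over all $\asymp\mathcal{L}^7$ good intervals runs into the same problem: both the accumulated integral over the good set and the total moment $\int_T^{2T}\Delta^k$ are of size $T^{1+k/4}$, so no contradiction is forced.

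The paper resolves this in one line by exploiting that $k$ is odd and that Theorem~\ref{th:maintain} gives \emph{both} signs. Set $\delta=-\operatorname{sign}(C_k)$ and pick a good interval for the sign $\delta$, so that $\delta\,\Delta(q_1q_2u;\ldots)>c_5u^{1/4}$ on $[t,t+H_0]$. Then
\[
\delta^k\big(\mathcal{F}_k(t+H_0)-\mathcal{F}_k(t)\big)
= \int_t^{t+H_0}\delta^k\Delta^k\,du - \delta^kC_k\big((t+H_0)^{1+k/4}-t^{1+k/4}\big),
\]
and since $\delta^k=\delta$, the second term equals $-\delta C_k(1+\tfrac{k}{4})t^{k/4}H_0+O(H_0^2t^{k/4-1})$ with $-\delta C_k=|C_k|\ge 0$. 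Hence the effective constant is $C_k^*=c_5^k+|C_k|(1+\tfrac{k}{4})>0$ regardless of the relative sizes of $c_5^k$ and $C_k$, and the argument closes immediately. Once you insert this sign choice, your proof is the paper's proof.
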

\begin{remark}
Although at the present moment we can only prove \eqref{sk} for $2\leq k\leq 9$, Theorem \ref{th:omega} holds for any odd $k\geq 2$.
\end{remark}
\vspace{1.5ex}

\section{proof of Theorem \ref{th:moments}}

In this section, we prove Theorem \ref{th:moments} by using  the Voronoi-type formula for $\Delta(x; r_1, q_1, r_2, q_2)$.

\begin{lemma}$($ See \cite{Jiazhai2017} $)$\label{lem:vor}

Let $J=[\frac{\mathcal{L}+2\log q_1q_2-4\log \mathcal{L}}{\log2}]$, $H\geq2$ be a parameter to be determined, and $ T^\varepsilon<y\leq \min(H^2, (q_1q_2)^2 T)\mathcal{L}^{-4}$. Suppose $\frac{T}{2}\leq x\leq T$.
Then
\begin{multline}\label{vor0}
  \Delta(q_1q_2x; r_1, q_1, r_2, q_2)=R_0(x; y)+{R_{12}}(x; y, H)+{R_{21}}(x; y, H)\\+G_{12}(x; H)+G_{21}(x; H)
  +O\big(\log^3(q_1q_2T)\big),
\end{multline}
where
\begin{align}
   R_0(x; y)&=\frac {x^{\frac{1}{4}}}{\sqrt{2}\pi}\sum_{n\leq y}\frac{1}{n^{\frac{3}{4}}}\sum_{n=hl}\cos\bigg(4\pi \sqrt{nx}-2\pi\Big(\frac{hr_2}{q_2}+\frac{lr_1}{q_1}+\frac{1}{8}\Big)\bigg), \label{R_0} \\
  \nonumber{R_{12}}(x; y, H)&=\frac {x^{\frac{1}{4}}}{\sqrt{2}\pi}\sum_{y<n\leq 2^{J+1}H^2}\frac{1}{n^{\frac{3}{4}}}\!\!\mathop{{\sum}'}_
{\begin{subarray}{c}  n=hl\\  1\leq h\leq H\\ h\leq l\leq2^{J+1}h\end{subarray}}
\!\!\!\!\cos\bigg(4\pi \sqrt{nx}\!-\!2\pi\Big(\frac{hr_2}{q_2}\!+\!\frac{lr_1}{q_1}\!+\!\frac{1}{8}\Big)\bigg),  \\
\nonumber {R_{21}}(x; y, H)&=\frac {x^{\frac{1}{4}}}{\sqrt{2}\pi}\sum_{y<n\leq 2^{J+1}H^2}\frac{1}{n^{\frac{3}{4}}}\!\!\mathop{{\sum}'}_
{\begin{subarray}{c} n=hl\\  1\leq h\leq H\\ h\leq l\leq2^{J+1}h\end{subarray}}
\!\!\!\!\cos\bigg(4\pi \sqrt{nx}\!-\!2\pi\Big(\frac{hr_1}{q_1}\!+\!\frac{lr_2}{q_2}\!+\!\frac{1}{8}\Big)\bigg),  \\
 \nonumber G_{12}(x; H)&=\sum_{\begin{subarray}{c}  n_1\leq q_1\sqrt{T}\\ n_1\equiv r_1\!\!\!\!\!\pmod{q_1}\end{subarray}}O\biggl(\min\Bigl(1, \frac{1}{H\|\frac{q_1x}{n_1}-\frac{r_2}{q_2}\|}\Bigr)\biggr),\\
\nonumber G_{21}(x; H)&=\sum_{\begin{subarray}{c}  n_2\leq q_2\sqrt{T}\\ n_2\equiv r_2\!\!\!\!\!\pmod{q_2}\end{subarray}}O\bigg(\min\Big(1, \frac{1}{H\|\frac{q_2x}{n_2}-\frac{r_1}{q_1}\|}\Big)\bigg).
\end{align}

\end{lemma}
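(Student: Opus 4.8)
The plan is to derive the decomposition directly from the definition of $D(q_1q_2x; r_1, q_1, r_2, q_2)$ by Dirichlet's hyperbola method executed with the two congruence constraints, followed by a truncated Voronoi (stationary-phase) transformation of the resulting exponential sums; since the statement is quoted from \cite{Jiazhai2017}, I indicate only the structure of that argument. First I would split the region $n_1n_2\leq q_1q_2x$, $n_i\equiv r_i\pmod{q_i}$, at the balanced hyperbola point $n_1=q_1\sqrt{x}$, $n_2=q_2\sqrt{x}$, subtracting the overcounted rectangle. For a fixed $n_1\equiv r_1\pmod{q_1}$, the inner count of $n_2\equiv r_2\pmod{q_2}$ with $n_2\leq q_1q_2x/n_1$ equals $\frac{q_1x}{n_1}-\frac{r_2}{q_2}+\mathrm{const}-\psi\big(\frac{q_1x}{n_1}-\frac{r_2}{q_2}\big)$, using $\lfloor t\rfloor=t-\psi(t)-\frac12$. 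Summing the smooth contributions over the progressions (where $\sum_{n\equiv r(q),\,n\leq Y}n^{-1}$ produces the digamma terms $\frac{\Gamma'}{\Gamma}(r_i/q_i)$) reproduces exactly the main part of \eqref{4}, so after cancellation and truncation at $q_i\sqrt{T}$ (permissible on $[T/2,T]$),
\[
\Delta(q_1q_2x;\ldots)=-\!\!\sum_{\substack{n_1\leq q_1\sqrt{T}\\ n_1\equiv r_1}}\!\!\psi\Big(\tfrac{q_1x}{n_1}-\tfrac{r_2}{q_2}\Big)-\!\!\sum_{\substack{n_2\leq q_2\sqrt{T}\\ n_2\equiv r_2}}\!\!\psi\Big(\tfrac{q_2x}{n_2}-\tfrac{r_1}{q_1}\Big)+O\big(\log(q_1q_2T)\big).
\]

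Second, I would insert the finite Fourier expansion $\psi(t)=-\sum_{1\leq|h|\leq H}\frac{e(ht)}{2\pi i h}+O\big(\min(1,\frac{1}{H\|t\|})\big)$. The $O$-terms are precisely $G_{12}(x;H)$ and $G_{21}(x;H)$, whose arguments $\frac{q_1x}{n_1}-\frac{r_2}{q_2}$ and $\frac{q_2x}{n_2}-\frac{r_1}{q_1}$ match the lemma verbatim. The main Fourier part leaves exponential sums $\sum_{n_1\equiv r_1}\frac1h\,e\big(\frac{hq_1x}{n_1}\big)e\big(-\frac{hr_2}{q_2}\big)$; the congruence phase $e(-hr_2/q_2)$, together with the additive character $e(-lr_1/q_1)$ produced by Poisson summation over the progression $n_1\equiv r_1\pmod{q_1}$, is the source of the arguments $\frac{hr_2}{q_2}+\frac{lr_1}{q_1}$ appearing in $R_0,R_{12},R_{21}$. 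To each inner sum I apply the stationary-phase method (equivalently Poisson plus saddle-point evaluation, as in the classical Voronoi derivation of $\Delta(x)$ used by Heath-Brown and Tsang \cite{heathbrown1994sign}): the stationary point of the dual phase $\frac{hq_1x}{n_1}-\frac{ln_1}{q_1}$, located at $n_1=q_1\sqrt{hx/l}$, converts the sum into a term of size $(hl)^{-3/4}x^{1/4}\cos\!\big(4\pi\sqrt{hl\,x}-2\pi(\tfrac{hr_2}{q_2}+\tfrac{lr_1}{q_1}+\tfrac18)\big)$, with $n=hl$ the dual modulus and the $\frac18$ coming from the $e(\pm i\pi/4)$ factor of stationary phase.

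Finally, I would organize the transformed terms by the size of $n=hl$: the contributions with $n\leq y$ reassemble into the complete divisor sum $R_0(x;y)$ (with $\sum_{n=hl}$ running over all factorizations), while the longer range $y<n\leq 2^{J+1}H^2$, decomposed dyadically in the ratio $l/h$ up to $2^{J+1}$ fixed by $J$, yields $R_{12}$ and $R_{21}$ according to the two orderings of the pair $h\leq l$. The hard part will be the uniform stationary-phase analysis: one must evaluate each exponential sum with an error that remains only logarithmic after summation over $1\leq h\leq H$ and over the $J\asymp\mathcal{L}$ dyadic blocks, uniformly in $q_1,q_2,r_1,r_2$. This is exactly where the hypotheses $T^\varepsilon<y\leq\min(H^2,(q_1q_2)^2T)\mathcal{L}^{-4}$ and the precise choice of $J$ enter, guaranteeing that the stationary point stays inside the summation range in the transformed variable and that the accumulated saddle-point, truncation, and non-stationary tail errors do not exceed $O\big(\log^3(q_1q_2T)\big)$.
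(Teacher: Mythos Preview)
The paper does not prove this lemma at all: it is stated with the attribution ``(See \cite{Jiazhai2017})'' and then used as a black box. Your proposal correctly recognizes this and offers a sketch of the argument that presumably appears in \cite{Jiazhai2017}; the outline you give --- hyperbola method producing two $\psi$-sums, truncated Fourier expansion of $\psi$ generating the $G_{12},G_{21}$ terms, then Poisson summation/stationary phase converting the Fourier part into the cosine sums $R_0,R_{12},R_{21}$ with phases $\frac{hr_2}{q_2}+\frac{lr_1}{q_1}+\frac18$ --- is the standard route and matches the structure of the quoted formula, so there is nothing to compare against in the present paper.
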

 where  ${\sum\limits_{n\leq x}}' f(n)$ indicates that if $x$ is an integer, then only $\frac{1}{2}f(x)$ is
counted.

Thus, we can get Theorem \ref{th:moments} by using Lemma \ref{lem:vor} with the approach of Liu \cite{LiuKui}.

\section{Proof of Theorem \ref{th:change}}

In this section, we prove Theorem \ref{th:change} following the approach of \cite{heathbrown1994sign}.

Suppose  $|f(t)|\leq c_1 t^\frac{1}{4}$. Let
\begin{equation*}
    \Delta^{**}(t)=\sqrt{2}\pi t^{-\frac{1}{2}}\Big(\Delta(q_1q_2t^2; r_1, q_1, r_2, q_2)+f(t^2)\Big),\quad \text{for }t\geq1.
\end{equation*}
Define
\begin{equation*}
    K_\zeta(u):=(1-|u|)\big(1+\zeta\sin(4\pi\alpha u)\big)\quad\text{for }|u|\leq1,
\end{equation*}
with $\zeta=1$ or $-1$, and $\alpha>1$ a large number.

\begin{lemma}\label{lem:change}
Suppose $T\gg(q_1q_2)^\varepsilon$ is a  large parameter. Then for each $\sqrt{T}\leq t\leq\sqrt{2T}$, we have
\begin{align*}
   &\int_{-1}^1\Delta^{**}(t+\alpha u)K_\zeta(u)du\\
=&-\frac{\zeta}{2}\sin\bigg(4\pi t -2\pi\Big(\frac{r_2}{q_2}+\frac{r_1}{q_1}+\frac{1}{8}\Big)\bigg) +O(\alpha^{-2})\\
&+O\big(t^{-\frac{1}{2}}\sup_{|u|\leq1}f((t\!+\!\alpha u)^2)\big)\!+\!O\big(t^{-\frac{1}{2}}\mathcal{L}^3\big).
\end{align*}
\end{lemma}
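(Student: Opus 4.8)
\emph{Proof plan.} The plan is to substitute the Voronoi-type expansion of Lemma~\ref{lem:vor} for $\Delta(q_1q_2x;r_1,q_1,r_2,q_2)$ at $x=(t+\alpha u)^2$ into the definition of $\Delta^{**}$, and to integrate the resulting terms one at a time against $K_\zeta(u)$ over $[-1,1]$. I would take the parameters of Lemma~\ref{lem:vor} to be $H=\sqrt{T}$ and $y=T^\varepsilon$; both are admissible when $T\gg(q_1q_2)^\varepsilon$, since then $T^\varepsilon\le T\mathcal{L}^{-4}=\min(H^2,(q_1q_2)^2T)\mathcal{L}^{-4}$. For $\sqrt{T}\le t\le\sqrt{2T}$ and $|u|\le1$ one has $x=(t+\alpha u)^2\asymp T$, $t+\alpha u>0$, and $\sqrt{2}\,\pi(t+\alpha u)^{-1/2}x^{1/4}=1$; a small bookkeeping point is that the set of such $x$ slightly exceeds an interval of the form $[\tilde T/2,\tilde T]$, so one applies Lemma~\ref{lem:vor} on $O(1)$ overlapping subranges (equivalently, one uses its obvious extension to $x\asymp\tilde T$). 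With this normalization the $R_0$-contribution to $\int_{-1}^1\Delta^{**}(t+\alpha u)K_\zeta(u)\,du$ equals
\[
\sum_{n\le y}\frac{1}{n^{3/4}}\sum_{n=hl}\int_{-1}^1\cos\!\Big(4\pi\sqrt{n}\,(t+\alpha u)-2\pi\big(\tfrac{hr_2}{q_2}+\tfrac{lr_1}{q_1}+\tfrac18\big)\Big)K_\zeta(u)\,du ,
\]
and the entire main term will come from the single summand $n=1$.

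For that summand, set $\phi=2\pi(\tfrac{r_2}{q_2}+\tfrac{r_1}{q_1}+\tfrac18)$, so the $n=1$ term is $\int_{-1}^1\cos(4\pi(t+\alpha u)-\phi)(1-|u|)(1+\zeta\sin(4\pi\alpha u))\,du$. I would evaluate it with the Fej\'er identity $\int_{-1}^1(1-|u|)e(bu)\,du=\big(\tfrac{\sin\pi b}{\pi b}\big)^2\ll\min(1,b^{-2})$: the part without the $\sin$ factor is $O(\alpha^{-2})$, and for the part carrying $\zeta\sin(4\pi\alpha u)$ I use $\cos A\sin B=\tfrac12(\sin(A+B)-\sin(A-B))$ with $A=4\pi(t+\alpha u)-\phi$, $B=4\pi\alpha u$, so that $A-B=4\pi t-\phi$ is independent of $u$; the $\sin(A+B)$ piece is again $O(\alpha^{-2})$, while $-\tfrac{\zeta}{2}\sin(4\pi t-\phi)\int_{-1}^1(1-|u|)\,du=-\tfrac{\zeta}{2}\sin(4\pi t-\phi)$ is exactly the asserted main term.

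It then remains to bound all other contributions by the stated error terms. For the summands $n\ge2$ of $R_0$, and for all of $R_{12}$ and $R_{21}$, the same product-to-sum and Fej\'er computation applies, and since $|\sqrt{n}\pm1|\ge\sqrt{2}-1$ for every integer $n\ge2$, the $n$-th term is $\ll\alpha^{-2}d(n)n^{-3/4}(\sqrt{n}-1)^{-2}\ll\alpha^{-2}d(n)n^{-7/4}$ (the restricted divisor ranges in $R_{12}$, $R_{21}$ only help), so summing the convergent series $\sum_{n\ge2}d(n)n^{-7/4}$ gives $O(\alpha^{-2})$. The $f$-term contributes $\sqrt{2}\,\pi(t+\alpha u)^{-1/2}f((t+\alpha u)^2)$ to $\Delta^{**}$, and since $|K_\zeta|\le2$ and $(t+\alpha u)^{-1/2}\asymp t^{-1/2}$ this integrates to $O\big(t^{-1/2}\sup_{|u|\le1}f((t+\alpha u)^2)\big)$. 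The error $O(\log^3(q_1q_2T))$ of Lemma~\ref{lem:vor} contributes $\ll t^{-1/2}\log^3(q_1q_2T)\ll t^{-1/2}\mathcal{L}^3$, using $q_1q_2\ll T^{1/\varepsilon}$.

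The hard part is the treatment of $G_{12}$ and $G_{21}$. Bounding $|K_\zeta|\le2$ and $(t+\alpha u)^{-1/2}\asymp t^{-1/2}$, it suffices to prove $\int_{-1}^1 G_{12}((t+\alpha u)^2;H)\,du\ll\mathcal{L}^3$, and likewise for $G_{21}$. I would interchange the finite sum over $n_1\le q_1\sqrt{T}$, $n_1\equiv r_1\pmod{q_1}$, with the integral; for each such $n_1$ the function $g(u)=\frac{q_1(t+\alpha u)^2}{n_1}-\frac{r_2}{q_2}$ is strictly increasing with $g'(u)\asymp\frac{q_1\alpha t}{n_1}$ and total variation $\asymp\frac{q_1\alpha t}{n_1}$. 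Because $\alpha>1$ and $t\asymp\sqrt{T}$, this total variation is $\gg1$ for \emph{every} admissible $n_1$, so after the substitution $v=g(u)$ and the elementary bound $\int_I\min(1,\tfrac{1}{H\|v\|})\,dv\ll|I|H^{-1}\log H$ (valid for any interval $I$ with $|I|\ge1$), each $n_1$ contributes $\ll H^{-1}\log H$. Since there are $\ll\sqrt{T}$ admissible $n_1$, the total is $\ll\sqrt{T}\,H^{-1}\log H\ll\mathcal{L}$ with $H=\sqrt{T}$, which is $\ll\mathcal{L}^3$. This $G$-estimate, together with the compatibility of $H=\sqrt{T}$ with the constraint $y\le\min(H^2,(q_1q_2)^2T)\mathcal{L}^{-4}$, is the only genuine obstacle; collecting all the contributions then yields the stated identity.
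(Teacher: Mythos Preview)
Your argument is correct and follows the same architecture as the paper's proof: insert the Voronoi-type expansion of Lemma~\ref{lem:vor}, isolate the $n=1$ term of $R_0$ via the Fej\'er identity $\int_{-1}^1(1-|u|)\cos(Au)\,du\ll A^{-2}$ together with product-to-sum for the $\zeta\sin(4\pi\alpha u)$ factor, and then bound the $G$-contribution separately. The Fej\'er/oscillation treatment of the $R$-sums is identical to the paper's.

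There are two points of genuine difference worth noting. First, the parameter choices: the paper takes $H=T$ and $y=T^{1/2}$, whereas you take $H=\sqrt{T}$ and $y=T^\varepsilon$ (strictly speaking Lemma~\ref{lem:vor} asks for $y>T^\varepsilon$, so you should take, say, $y=2T^\varepsilon$; this is cosmetic). Both choices keep $n=1$ inside $R_0$ and make the error acceptable. Second, and more substantively, your handling of $G_{12},G_{21}$ is different and more elementary: you use the monotone change of variables $v=g(u)$ together with the pointwise bound $\int_I\min(1,\tfrac{1}{H\|v\|})\,dv\ll|I|H^{-1}\log H$ (valid since $|I|\asymp q_1\alpha t/n_1\ge\alpha>1$ for every admissible $n_1$), giving $\ll H^{-1}\log H$ per term and $\ll\sqrt{T}\,H^{-1}\log H\ll\mathcal{L}$ in total. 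The paper instead expands $\min(1,\tfrac{1}{H\|r\|})=\sum_h a(h)e(hr)$ as a Fourier series and applies the first derivative test to the resulting exponential integrals, obtaining $\int_{-1}^1 G^*(t+\alpha u)\,du\ll H^{-1}T^{1/2}\log^2 H$, which with $H=T$ is negligible. Your route avoids the Fourier expansion entirely at the cost of a slightly less sharp (but amply sufficient) bound; either method lands safely inside the stated $O(t^{-1/2}\mathcal{L}^3)$.
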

\begin{proof}
Let $J=[\frac{\mathcal{L}+2\log q_1 q_2-4\log \mathcal{L}}{\log2}]$, $H\geq2$ be a parameter to be determined, and $ T^\varepsilon<y\leq \min(H^2, (q_1q_2)^2 T)\mathcal{L}^{-4}$. From \eqref{vor0}, we have
\begin{align}\label{vor}
   \Delta^{**}(t)=&R^*_0(t; y)\!+\!R^*_{12}(t; y, H)\!+\!R^*_{21}(t; y, H)\!+\!\sqrt{2}\pi t^{-\frac{1}{2}}f(t^2)\\
\nonumber&\!+\!O\big(t^{-\frac{1}{2}}\big(G^*_{12}(t; H)\!+\!G^*_{21}(t; H)\big)\big)
  \!+\!O\big(t^{-\frac{1}{2}}\mathcal{L}^3\big),
\end{align}
where
 \begin{align*}
    R^*_0(t; y)&=\sum_{n\leq y}\frac{1}{n^{\frac{3}{4}}}\sum_{n=hl}\cos\bigg(4\pi \sqrt{n}t-2\pi\Big(\frac{hr_2}{q_2}+\frac{lr_1}{q_1}+\frac{1}{8}\Big)\bigg),\\
    R^*_{12}(t; y, H)&=\sum_{y<n\leq 2^{J+1}H^2}\frac{1}{n^{\frac{3}{4}}}\mathop{{\sum}'}_
{\begin{subarray}{c}  n=hl\\  1\leq h\leq H\\ h\leq l\leq2^{J+1}h\end{subarray}}
\cos\bigg(4\pi \sqrt{n}t\!-\!2\pi\Big(\frac{hr_2}{q_2}+\frac{lr_1}{q_1}+\frac{1}{8}\Big)\bigg),\\
  R^*_{21}(t; y, H)&=\sum_{y<n\leq 2^{J+1}H^2}\frac{1}{n^{\frac{3}{4}}}\mathop{{\sum}'}_
{\begin{subarray}{c} n=hl\\  1\leq h\leq H\\ h\leq l\leq2^{J+1}h\end{subarray}}
\cos\bigg(4\pi \sqrt{n}t\! -\!2\pi\Big(\frac{hr_1}{q_1}+\frac{lr_2}{q_2} +\frac{1}{8}\Big)\bigg),\\
      G^*_{12}(t; H) &= \sum_{\begin{subarray}{c}  n_1\leq q_1\sqrt{T}\\ n_1\equiv r_1\!\!\!\!\!\pmod{q_1}\end{subarray}}\min\Bigl(1, \frac{1}{H\|\frac{q_1t^2}{n_1}-\frac{r_2}{q_2}\|}\Bigr),\label{G12}\\
   G^*_{21}(t; H)&=\sum_{\begin{subarray}{c}  n_2\leq q_2\sqrt{T}\\ n_2\equiv r_2\!\!\!\!\!\pmod{q_2}\end{subarray}}\min\Big(1, \frac{1}{H\|\frac{q_2t^2}{n_2}-\frac{r_1}{q_1}\|}\Big).
\end{align*}
Denote
\begin{align*}
    &R^*(t)=R^*_0(t; y)\!+\!R^*_{12}(t; y, H)\!+\!R^*_{21}(t; y, H),&
    &G^*(t)=G^*_{12}(t; H)\!+\!G^*_{21}(t; H).
\end{align*}
Then
\begin{equation}\label{s}
  \Delta^{**}(t)=R^*(t)\!+\sqrt{2}\pi t^{-\frac{1}{2}}f(t^2)\!+\!O\big(t^{-\frac{1}{2}}G^*(t)\big)
  \!+\!O\big(t^{-\frac{1}{2}}\mathcal{L}^3\big).
\end{equation}

We first consider $\int_{-1}^1G^*(t+\alpha u)du$. Noting that
\begin{equation*}
    \min\Big(1,\frac{1}{H\|r\|}\Big)=\sum_{h=-\infty}^\infty a(h)e(hr)
\end{equation*}
with
\begin{align*}
    a(0)\ll H^{-1}\log H,&&a(h)\ll\min\Big(H^{-1}\log H,h^{-2}H\Big),\ h\neq0.
\end{align*}
We have
\begin{align*}
    &\int_{-1}^1G^*_{12}(t+\alpha u;H)du\\
    =&\sum_{h=-\infty}^\infty a(h)\sum_{\begin{subarray}{c}  n_1\leq q_1\sqrt{T}\\ n_1\equiv r_1\!\!\!\!\!\pmod{q_1}\end{subarray}}e\Big(\frac{hq_1t^2}{n_1}-\frac{hr_2}{q_2}\Big)\int_{-1}^1e\Big(\frac{2hq_1t\alpha u+hq_1\alpha^2u^2}{n_1}\Big)du\\
    \ll&|a(0)|\sqrt{T}+\sum_{h=1}^\infty |a(h)|\sum_{\begin{subarray}{c}  n_1\leq q_1\sqrt{T}\\ n_1\equiv r_1\!\!\!\!\!\pmod{q_1}\end{subarray}}\frac{n_1}{hq_1t\alpha}\\
    \ll&H^{-1}T^\frac{1}{2}\log H+\sum_{h=1}^HH^{-1}(\log H)T(ht\alpha)^{-1}+\sum_{h=H}^\infty HT(t\alpha)^{-1}h^{-3}\\
    \ll&H^{-1}T^\frac{1}{2}\log^2 H,
\end{align*}
where the first derivative test was used. This estimate remain valid with $G^*_{12}$ replaced by $G^*_{21}$, which yields
\begin{equation}\label{G}
    \int_{-1}^1G^*(t+\alpha u)du\ll H^{-1}T^\frac{1}{2}\log^2 H.
\end{equation}

Now we  estimate the integral $\int_{-1}^1R^*(t+\alpha u)K_\zeta(u)du$. Let $\theta_0$ be some constant. By the elementary formula
\begin{align*}
    &\cos\big(4\pi (t+\alpha u)\sqrt{n}+\theta_0\big)\\
    =&\cos\big(4\pi t\sqrt{n}+\theta_0\big)\cos(4\pi \alpha u\sqrt{n})-\sin\big(4\pi t\sqrt{n}+\theta_0\big)\sin(4\pi \alpha u\sqrt{n}),
\end{align*}
we get
\begin{align*}
    \int_{-1}^1\cos\big(4\pi (t+\alpha u)\sqrt{n}+\theta_0\big)(1-|u|)\big(1+\zeta\sin(4\pi\alpha u)\big)du
    =I_1-I_2,
\end{align*}
with
\begin{align*}
    I_1=&\cos\big(4\pi t\sqrt{n}+\theta_0\big)\int_{-1}^1\cos(4\pi \alpha u\sqrt{n})(1-|u|)\big(1+\zeta\sin(4\pi\alpha u)\big)du\\
      =&\cos\big(4\pi t\sqrt{n}+\theta_0\big)\int_{-1}^1\cos(4\pi \alpha u\sqrt{n})(1-|u|)du,
 \end{align*}
  \begin{align*}
    I_2=&\sin\big(4\pi t\sqrt{n}+\theta_0\big)\int_{-1}^1\sin(4\pi \alpha u\sqrt{n})(1-|u|)\big(1+\zeta\sin(4\pi\alpha u)\big)du\\
    =&\zeta\sin\big(4\pi t\sqrt{n}+\theta_0\big)\int_{-1}^1\sin(4\pi \alpha u\sqrt{n})(1-|u|)\sin(4\pi\alpha u)du\\
   =&\frac{\zeta}{2}\sin\big(4\pi t\sqrt{n}+\theta_0\big)\int_{-1}^1(1-|u|)\cos\big(4\pi\alpha u(\sqrt{n}-1)\big)du\\
   &-\frac{\zeta}{2}\sin\big(4\pi t\sqrt{n}+\theta_0\big)\int_{-1}^1(1-|u|)\cos\big(4\pi\alpha u(\sqrt{n}+1)\big)du.\\
\end{align*}
By using
\begin{equation*}
     \int_0^1(1-u)\cos(A u)du\ll |A|^{-2}\quad A\neq0,
\end{equation*}
we have
\begin{align*}
    I_1\ll&\alpha^{-2}n^{-1},\\
    I_2=&\left\{\begin{array}{ll}\frac{\zeta}{2}\sin\big(4\pi t\!+\theta_0\big)+O(\alpha^{-2}),&n=1, \\O(\alpha^{-2}(\sqrt{n}-1)^{-2}),&n\neq 1,\end{array}\right.
\end{align*}
which suggests
\begin{align*}
    \int_{-1}^1\cos\big(4\pi (t+\alpha u)\sqrt{n}\!+\!\theta_0\big)K_\zeta(u)du=&\left\{\begin{array}{ll}-\frac{\zeta}{2}\sin\big(4\pi t\!+\theta_0\big)+O(\alpha^{-2}),&n=1, \\O(\alpha^{-2}(\sqrt{n}-1)^{-2}),&n\neq 1.\end{array}\right.
\end{align*}
Take $H= T$, $y= T^\frac{1}{2}$. Then clearly $y>1$. Thus we get
\begin{align}\label{R}
   &\int_{-1}^1R^*(t+\alpha u)K_\zeta(u)du\\
\nonumber=&\!-\frac{\zeta}{2}\sin\bigg(4\pi t -2\pi\Big(\frac{r_2}{q_2}+\frac{r_1}{q_1}+\frac{1}{8}\Big)\bigg)+O\Big(\sum_{n>1}\frac{d(n)}{\alpha^2n^\frac{3}{4}(\sqrt{n}-1 )^{2}}\Big)\\
\nonumber=&\!-\frac{\zeta}{2}\sin\bigg(4\pi t -2\pi\Big(\frac{r_2}{q_2}+\frac{r_1}{q_1}+\frac{1}{8}\Big)\bigg) +O(\alpha^{-2}),
\end{align}
by using $\sum_{n>1}\frac{d(n)}{n^\frac{3}{4}(\sqrt{n}-1)^{2}}\ll1$.
Noting that $H=T$, $t\asymp T^\frac{1}{2}$, by \eqref{s}-\eqref{R}, we see
\begin{align*}
   &\int_{-1}^1\Delta^{**}(t+\alpha u)K_\zeta(u)du\\
=&-\frac{\zeta}{2}\sin\bigg(4\pi t -2\pi\Big(\frac{r_2}{q_2}+\frac{r_1}{q_1}+\frac{1}{8}\Big)\bigg) +O(\alpha^{-2})\\
&+O\big(t^{-\frac{1}{2}}\sup_{|u|\leq1}f((t\!+\!\alpha u)^2)\big)\!+\!O\big(t^{-\frac{1}{2}}H^{-1}T^\frac{1}{2}\mathcal{L}^2\big)
  \!+\!O\big(t^{-\frac{1}{2}}\mathcal{L}^3\big).
\end{align*}
Thus we complete the proof of Lemma \ref{lem:change}
\end{proof}

\section{ The mean value of $\Delta(q_1q_2x; r_1, q_1, r_2, q_2)$ in short intervals}

In this section, we need the following Lemma.

\begin{lemma}\label{lem:Hil}$($ Hilbert's inequality $)$$($ See e.g.\cite{shan} $)${\, \, \rm} Let $x_1<x_2<\cdots<x_n$ be a sequence of real numbers.  If there exists$ ~\delta>0$, such that $\min\limits_{s\neq r}|x_r-x_s|\geq \delta_r\geq\delta>0 (1\leq r\leq n)$, then there exists an absolute constant $C$, such that
\begin{equation*}
  \bigg|\sum_{s\neq r}u_r\bar{{u_s}}(x_r-x_s)^{-1}\bigg|\leq C\sum_r {\delta_r}^{-1}\left|u_r\right|^2,
\end{equation*}
for arbitrary complex numbers $u_1,u_2,\cdots,u_n$.
\end{lemma}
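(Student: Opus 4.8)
This is the weighted form of Hilbert's inequality, due to Montgomery and Vaughan; the plan is to reduce the bilinear sum to an $\ell^2$ operator-norm bound and then obtain that bound by comparison with the Hilbert transform. I would first relabel so that $x_1<x_2<\cdots<x_n$, take $\delta_r=\min_{s\neq r}|x_r-x_s|$ as the weights (enlarging the $\delta_r$ of the hypothesis only strengthens the claim), and put $(Tu)_r=\sum_{s\neq r}(x_r-x_s)^{-1}u_s$. After relabelling the summation indices, the left-hand side of the asserted inequality equals $|\langle Tu,u\rangle|$, and Cauchy--Schwarz gives
\[
\Bigl|\sum_{s\neq r}u_r\overline{u_s}\,(x_r-x_s)^{-1}\Bigr|\le\Bigl(\sum_r\delta_r|(Tu)_r|^2\Bigr)^{1/2}\Bigl(\sum_r\delta_r^{-1}|u_r|^2\Bigr)^{1/2}.
\]
Thus it suffices to prove the one-sided estimate $\sum_r\delta_r|(Tu)_r|^2\ll\sum_r\delta_r^{-1}|u_r|^2$; writing $u_r=\delta_r^{1/2}w_r$, this is exactly the statement that the matrix $M$ with $M_{rs}=(\delta_r\delta_s)^{1/2}(x_r-x_s)^{-1}$ for $r\neq s$ and $M_{rr}=0$ has operator norm $O(1)$ on $\ell^2$.

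For the uniformly spaced case $\delta_r\equiv\delta$ this is the classical Hilbert inequality and the Fourier method is immediate: after rescaling to $\delta=1$, the sum is, in the integer case, $2\pi i\int_0^1\bigl|\sum_n u_ne(n\theta)\bigr|^2(\theta-\tfrac12)\,d\theta$, which Parseval bounds by $\pi\sum_n|u_n|^2$, and for well-spaced reals the same conclusion follows from a Fejér-kernel Plancherel identity (integrating $\bigl|\sum_r u_r e(x_r t)\bigr|^2$ against a kernel whose Fourier transform agrees with $\xi\mapsto\xi^{-1}$ for $|\xi|\ge1$ and is odd and bounded near the origin, the separation ensuring that the diagonal drops out). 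To pass to the general case with the \emph{individual} weights $\delta_r^{-1}$, I would model $M$ on the Hilbert transform $Hf(x)=\mathrm{p.v.}\int_{\mathbb R}(x-y)^{-1}f(y)\,dy$, which obeys $\|Hf\|_2=\pi\|f\|_2$ by Plancherel. The key geometric fact is that the intervals $I_r:=(x_r-\tfrac12\delta_r,\,x_r+\tfrac12\delta_r)$ are pairwise disjoint, since $|x_r-x_s|\ge\max(\delta_r,\delta_s)\ge\tfrac12(\delta_r+\delta_s)$, with $|I_r|=\delta_r$; so $g:=\sum_s u_s\delta_s^{-1}\varphi((\cdot-x_s)/\delta_s)$ (with $\varphi$ a fixed even smooth bump on $(-\tfrac12,\tfrac12)$, $\int\varphi=1$) satisfies $\|g\|_2^2\asymp\sum_s\delta_s^{-1}|u_s|^2$, and, because the principal-value integral of $(x_r-y)^{-1}$ against $\varphi((\cdot-x_r)/\delta_r)$ vanishes by symmetry, a Taylor expansion of the Cauchy kernel yields $Hg(x_r)=(Tu)_r+O\bigl(\sum_{s\neq r}|u_s|\,\delta_s^2|x_r-x_s|^{-3}\bigr)$.

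The main obstacle is to turn this into the inequality $\sum_r\delta_r|(Tu)_r|^2\ll\|g\|_2^2$: one must compare $\delta_r|Hg(x_r)|^2$ with $\int_{I_r}|Hg|^2$ (controlling the oscillation of $Hg$ on $I_r$ through its derivative), and then show that the several discretisation errors — kernels decaying like $|x_r-x_s|^{-3}$, or faster if $\varphi$ and the local average are taken with more vanishing moments, once the correct powers of $\delta_r,\delta_s$ are attached — define bounded operators on $\ell^2$. The useful comparisons are of the form $\sum_{s\neq r}\delta_s|x_r-x_s|^{-\gamma}\ll\delta_r^{1-\gamma}$ for $\gamma>1$, obtained from $|x_r-y|\asymp|x_r-x_s|$ on $I_s$ and $\bigcup_{s\neq r}I_s\subseteq\{|y-x_r|\ge\tfrac12\delta_r\}$, so that the sum is dominated by $\int_{|y-x_r|\ge\delta_r/2}|y-x_r|^{-\gamma}\,dy$. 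Turning these into $\ell^2$ bounds requires a Schur test whose weights must be chosen with care — the naive power weights are borderline (logarithmically divergent at the critical exponent), which is exactly why the weighted Hilbert inequality is subtler than the uniform one and why the cited reference does genuine work here; a clean alternative is to invoke the Beurling--Selberg extremal functions associated with $\xi\mapsto\xi^{-1}$, which also deliver the sharp constant. Once the error operators are dispatched, $\|M\|_{\ell^2\to\ell^2}\ll1$ follows, and unwinding the first reduction gives the lemma with an absolute constant $C$.
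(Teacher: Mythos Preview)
The paper does not prove this lemma: it is quoted verbatim as a known result with a citation (``See e.g.\ \cite{shan}''), so there is no in-paper argument to compare against. Your proposal goes well beyond what the paper does by sketching an actual proof of the Montgomery--Vaughan weighted Hilbert inequality.

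As a sketch, your reduction is sound: the Cauchy--Schwarz step and the substitution $u_r=\delta_r^{1/2}w_r$ correctly reduce the bilinear estimate to an $\ell^2$ operator-norm bound for the matrix $M_{rs}=(\delta_r\delta_s)^{1/2}(x_r-x_s)^{-1}$, and the geometric observation that the intervals $I_r=(x_r-\tfrac12\delta_r,\,x_r+\tfrac12\delta_r)$ are pairwise disjoint is exactly the right input for a discretisation of the continuous Hilbert transform. Your own diagnosis is accurate, however: the step where you must pass from $\|Hg\|_2$ to $\sum_r\delta_r|Hg(x_r)|^2$ and control the discretisation remainders by a Schur test is the heart of the matter, and with power weights the row/column sums are indeed borderline divergent --- so as written this paragraph is a plan rather than a proof. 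The clean routes you allude to (Montgomery--Vaughan's original variational argument, or the Beurling--Selberg extremal-function method giving the sharp constant $C=\pi$) are precisely what the cited reference supplies; if you want a self-contained argument, the extremal-function approach is the shortest to write out in full. In the context of this paper, though, simply citing the result --- as the authors do --- is entirely appropriate.
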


Suppose $T\gg (q_1q_2)^{\varepsilon}$ is a large parameter, $1\leq h_0\leq \frac{1}{2}\sqrt{T}$. Denote $\Delta^{*}(q_1q_2x)=\Delta(q_1q_2x; r_1, q_1, r_2, q_2)$.  In this section we shall
estimate the integral
\begin{equation*}
    I(T,h_0)=\int_1^T\big(\Delta^{*}(q_1q_2(x+h_0)-\Delta^{*}(q_1q_2x)\big)^2dx,
\end{equation*}
which would play an important role in the proof of Theorem \ref{th:maintain}. This type of integral was studied for the error term in the mean square of $\zeta(\frac{1}{2}+ it)$ by Good \cite{good1977einomega},
for the error term in the Dirichlet divisor problem by Jutila \cite{jutila1984divisor} and for the error term in Weyl's law for Heisenberg manifold by Tsang and Zhai \cite{tsang2012sign}. Here we follows the approach of Tsang and Zhai \cite{tsang2012sign} and prove the following
\begin{lemma}\label{lem:meanvalue}  The estimate
\begin{equation*}
    I(T,h_0)\ll Th_0\log^3\frac{\sqrt{T}}{h_0}+T\mathcal{L}^6
\end{equation*}
holds  uniformly for $1\leq h_0\leq \frac{1}{2}\sqrt{T}$.
\end{lemma}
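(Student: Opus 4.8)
The plan is to expand the square and reduce the problem to estimating a double sum over the Voronoi-type expansion from Lemma~\ref{lem:vor}. First I would apply \eqref{vor0} (or rather its rescaled form used in the proof of Lemma~\ref{lem:change}) to both $\Delta^{*}(q_1q_2(x+h_0))$ and $\Delta^{*}(q_1q_2x)$, with a suitable choice of the truncation parameters $y$ and $H$ depending on $h_0$ — one expects $y\asymp T/h_0^2$ to be the natural cutoff, since for $n\le y$ the phase $4\pi(\sqrt{n(x+h_0)}-\sqrt{nx})\approx 2\pi h_0\sqrt{n/x}$ is small and the corresponding terms in the difference are tiny, while for $n>y$ they oscillate. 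The contribution of the $G^{*}_{12}, G^{*}_{21}$ and the $O(\log^3)$ error terms is handled trivially by Cauchy--Schwarz and the first-derivative-test bounds already used in Section~4, contributing $O(T\mathcal{L}^6)$.

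The core is then $\int_1^T |R_0(x+h_0;y)-R_0(x;y) + (\text{tail terms})|^2\,dx$. For the main diagonal-type contribution I would square out the cosine sums, integrate term by term in $x$, and split into the diagonal $m=n$ and off-diagonal $m\ne n$ parts. The diagonal gives, after using $\sum_{n\le y} d(n)^2 n^{-3/2}\cdot (\text{smallness of the difference})$, a main term of size $\ll T h_0 \log^3(\sqrt{T}/h_0)$: the $\log^3$ arises because $\sum_{n\le y} d(n)^2/n \asymp \log^4 y$ gets tempered by the factor measuring how much each term of the difference contributes, effectively $\min(1, h_0^2 n/T)$, which after partial summation yields three logarithms times $h_0$. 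For the off-diagonal terms, integrating $\cos(4\pi(\sqrt m-\sqrt n)x+\cdots)$ over $[1,T]$ produces a factor $\ll |\sqrt m-\sqrt n|^{-1}$, and here Hilbert's inequality (Lemma~\ref{lem:Hil}) is exactly the tool: with $x_n = 2\sqrt n$ spaced by $\gg n^{-1/2}$, it bounds $\sum_{m\ne n} u_m\bar u_n(\sqrt m-\sqrt n)^{-1}$ by $\ll \sum_n \sqrt n\,|u_n|^2$, and with $u_n$ the appropriately weighted coefficient of the difference (roughly $n^{-3/4}\min(1, h_0\sqrt{n/T})$ times a divisor factor) this sum is again $\ll T h_0 \log^3(\sqrt T/h_0) + T\mathcal{L}^6$.

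The main obstacle, as usual in this circle of ideas, is bookkeeping the off-diagonal terms uniformly in the range $1\le h_0\le \tfrac12\sqrt T$, in particular controlling the transition region $n\asymp y$ where terms are neither negligibly small nor fully oscillating, and making sure the application of Hilbert's inequality is legitimate for the $R_{12}, R_{21}$ pieces whose summation conditions ($h\le l\le 2^{J+1}h$, $h\le H$) are less symmetric than those of $R_0$. One must also verify that the cross terms between $R_0$ and the $R_{12}, R_{21}$ tails, and between the two tails, are absorbed into the stated bound; these are handled by the same diagonal/Hilbert dichotomy but require care because the spacing $\delta_r$ in Lemma~\ref{lem:Hil} must be taken as $\delta_r \asymp 1/\sqrt r$ rather than a uniform $\delta$, so the weights $\delta_r^{-1}$ on the right-hand side must be tracked. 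Once these are in place, collecting the diagonal main term $\ll T h_0\log^3(\sqrt T/h_0)$ and the error $\ll T\mathcal{L}^6$ completes the proof; I would follow Tsang--Zhai~\cite{tsang2012sign} and Jutila~\cite{jutila1984divisor} closely for the organization of these estimates.
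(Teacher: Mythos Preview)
Your plan matches the paper's proof: apply the Voronoi expansion of Lemma~\ref{lem:vor}, square out, bound the diagonal with the weight $\min(1,h_0^2 n/x)$ to produce $Th_0\log^3(\sqrt{T}/h_0)$, handle the off-diagonal by the first derivative test followed by Hilbert's inequality (Lemma~\ref{lem:Hil}), and absorb the $G$-terms and tails into $T\mathcal{L}^6$. The only adjustments are that the paper first splits off $[1,100\max(h_0^2,T^{2/3})]$ (handled by Corollary~\ref{cor:1}) and then works dyadically on $[U,2U]$ with $y=\min(\tfrac12 Uh_0^{-1},\,U\log^{-6}U)$ rather than your $y\asymp T/h_0^2$ --- the constraint $y\ll U/h_0$ is what guarantees $|g'_-(x)|\gg|\sqrt{n_1}-\sqrt{n_2}|/\sqrt{x}$ uniformly despite the $h_0$-shifted arguments --- and it disposes of the $R_{12},R_{21}$ tails directly in $L^2$ by quoting \cite{LiuKui} instead of repeating the diagonal/Hilbert split for them.
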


\begin{proof}
Write
\begin{equation}\label{I}
    I(T,h_0)=\int_1+\int_2,
\end{equation}
where
\begin{align*}
    \int_1=&\int_1^{100\max(h_0^2,T^\frac{2}{3})}(\Delta^{*}(q_1q_2(x+h_0)-\Delta^{*}(q_1q_2x)\big)^2dx,\\
    \int_2=&\int_{100\max(h_0^2,T^\frac{2}{3})}^T(\Delta^{*}(q_1q_2(x+h_0)-\Delta^{*}(q_1q_2x)\big)^2dx.
\end{align*}
From Corollary \ref{cor:1}, we see that
\begin{equation}\label{int1}
    \int_1\ll (h_0^3+T)\ll Th_0.
\end{equation}

For $\int_2$, first we estimate the integral
\begin{equation}\label{J}
    J(U,h_0)\!=\!\int_U^{2U}\!\!\!(\Delta^{*}(q_1q_2(x+h_0)-\Delta^{*}(q_1q_2x)\big)^2dx, \!\quad100\max(h_0^2,T^\frac{2}{3})\leq U\leq T.
\end{equation}
Let $T=2U$ in \eqref{vor0}. Then
\begin{align*}
   \Delta^{*}(q_1q_2x)
=&R_0(x; y)\!+\!R_{12}(x; y, H)\!+\!R_{21}(x; y, H)\!\\
\nonumber&+\!G_{12}(x; H)\!+\!G_{21}(x; H)
  \!+\!O\big(\log^3U\big).
\end{align*}
Take $H=U$, $y=\min\big(\frac{1}{2}Uh_0^{-1},U\log^{-6} U\big)$. From \cite[Lemma 4.1 and eq.(4.11)]{LiuKui}, we see
\begin{gather*}
\int_U^{2U}|G_{12}(x; H)\!+\!G_{21}(x; H)|^2dx\ll U\log U,\label{G2}\\
\int_U^{2U}|R_{12}(x; y, H)\!+\!R_{21}(x; y, H)|^2dx\ll U^\frac{3}{2}y^{-\frac{1}{2}}\log^3 U.\label{R2}
\end{gather*}
Therefor
\begin{align}\label{s/r}
    \int_U^{2U}\big(\Delta^{*}(q_1q_2x)-\!R_{0}(x; y)\big)^2dx\ll&  U^\frac{3}{2}y^{-\frac{1}{2}}\log^3 U+ U\log^6 U\\
   \nonumber \ll&  Uh_0^\frac{1}{2}\log^3 U+ U\log^6 U.
\end{align}

We now estimate $\int_U^{2U}\big(R_{0}(x+h_0; y)-R_{0}(x; y)\big)^2dx$. Set $\theta(h,l)=2\pi(\frac{hr_2}{q_2}+\frac{lr_1}{q_1})$. From \eqref{R_0}, we have
\begin{equation}\label{dR0}
    R_{0}(x+h_0; y)-R_{0}(x; y)=F_1(x)+F_2(x),
\end{equation}
where
\begin{align*}
    F_1(x)\!=&\frac{1}{\sqrt{2}\pi}\big((x\!+\!h_0)^\frac{1}{4}\!-\!x^\frac{1}{4}\big)\sum_{n\leq y}\!\frac{1}{n^{\frac{3}{4}}}\!\sum_{n=hl}\!\cos\!\big(\!4\pi \sqrt{n(x\!+\!h_0)}\!-\theta(h,l)\!-\!\frac{\pi}{4}\big)\!\Big),\\
    F_2(x)\!=&\frac{x^\frac{1}{4}}{\sqrt{2}\pi}\!\sum_{n\leq y}\!\frac{1}{n^{\frac{3}{4}}}\!\sum_{n=hl}
\!\!\Big(\!\cos\!\big(\!4\pi\! \sqrt{n(x\!+\!h_0)}\!-\!\theta(h,l)\!-\!\frac{\pi}{4}\!\big)\!-\!\cos\big (4\pi\! \sqrt{nx}\!-\!\theta(h,l)\!-\!\frac{\pi}{4}\!\big)\!\Big).
\end{align*}

From \cite[Proof of Lemma 4.2]{LiuKui}, we get
\begin{align}\label{F1}
   \int_U^{2U}F_1^2(x)dx\ll h_0^2U^{-2}\int_U^{2U}R_0^2(x+h_0)dx\ll h_0^2U^{-\frac{1}{2}}.
\end{align}

For  the mean square of $F_2(x)$, we see
\begin{equation}\label{F20}
    F_2^2=F_{21}+F_{22},
\end{equation}
where
\begin{align*}
    F_{21}(x)\!
    =&\frac{x^\frac{1}{2}}{2\pi^2}\!\sum_{n\leq y}\!\frac{1}{n^{\frac{3}{2}}}\\
&\times\Big(\sum_{n=hl}\!\cos\!\big(\!4\pi \sqrt{n(x\!+\!h_0)}\!-\!\theta(h,l)\!-\!\frac{\pi}{4}\big)\!-\!\cos\big (4\pi \sqrt{nx}\!-\!\theta(h,l)\!-\!\frac{\pi}{4}\!\big)\!\Big)^2,\\
    F_{22}(x)\!
    =&\frac{x^\frac{1}{2}}{2\pi^2}\!\sum_{\begin{subarray}{c}n_1,n_2\leq y\\n_1\neq n_2\end{subarray}}\!\frac{1}{(n_1n_2)^{\frac{3}{4}}}\sum_{n_1=h_1l_1}\sum_{n_2=h_2l_2}\\
&\times\Big(\!\cos\!\big(\!4\pi \sqrt{n_1(x\!+\!h_0)}\!-\!\theta(h_1,l_1)\!-\!\frac{\pi}{4}\!\big)\!-\!\cos\big (4\pi \sqrt{n_1x}\!-\!\theta(h_1,l_1)\!-\!\frac{\pi}{4}\!\big)\!\Big)\\
&\times\Big(\!\cos\!\big(\!4\pi \sqrt{n_2(x\!+\!h_0)}\!-\!\theta(h_2,l_2)\!-\!\frac{\pi}{4}\!\big)\!-\!\cos\big (4\pi \sqrt{n_2x}\!-\!\theta(h_2,l_2)\!-\!\frac{\pi}{4}\!\big)\!\Big)\\
=&\frac{x^\frac{1}{2}}{2\pi^2}\!\sum_{\begin{subarray}{c}n_1,n_2\leq y\\n_1\neq n_2\end{subarray}}\!\frac{1}{(n_1n_2)^{\frac{3}{4}}}\sum_{n_1=h_1l_1}\sum_{n_2=h_2l_2}\sum_{j_1=0}^1 \sum_{j_2=0}^1(-1)^{j_1+j_2}\\
& \!\times\!\cos\!\big(\!4\pi \sqrt{n_1(x\!+\!j_1h_0)}\!-\!\theta(h_1,l_1)\!-\!\frac{\pi}{4}\!\big)\!\cos\!\big(\!4\pi \sqrt{n_2(x\!+\!j_2h_0)}\!-\!\theta(h_2,l_2)\!-\!\frac{\pi}{4}\!\big).
\end{align*}

Write
 \begin{align}\label{F220}
    F_{22}(x)=:F_{221}(x)+F_{222}(x),
 \end{align}
with
\begin{align*}
    F_{221}(x)=&\frac{x^\frac{1}{2}}{4\pi^2}\sum_{j_1=0}^1 \sum_{j_2=0}^1(-1)^{j_1+j_2}\sum_{\begin{subarray}{c}n_1,n_2\leq y\\n_1\neq n_2\end{subarray}}\frac{1}{(n_1n_2)^{\frac{3}{4}}}\sum_{n_1=h_1l_1}\sum_{n_2=h_2l_2}\\
& \times\cos\big(4\pi \sqrt{n_1(x+j_1h_0)}-4\pi \sqrt{n_2(x+j_2h_0)}-\theta(h_1-h_2,l_1-l_2)\big),
 \end{align*}
\begin{align*}
    F_{222}(x)=&\frac{x^\frac{1}{2}}{4\pi^2}\sum_{j_1=0}^1 \sum_{j_2=0}^1(-1)^{j_1+j_2}\sum_{\begin{subarray}{c}n_1,n_2\leq y\\n_1\neq n_2\end{subarray}}\frac{1}{(n_1n_2)^{\frac{3}{4}}}\sum_{n_1=h_1l_1}\sum_{n_2=h_2l_2}\\
& \times\sin\big(4\pi \sqrt{n_1(x+j_1h_0)}+4\pi \sqrt{n_2(x+j_2h_0)}-\theta(h_1+h_2,l_1+l_2)\big).
\end{align*}
Let
\begin{equation*}
    g_\pm(x)=4\pi\sqrt{n_1(x\!+\!j_1h_0)}\pm\!4\pi\sqrt{n_2(x\!+\!j_2h_0)}-\theta(h_1\pm h_2,l_1\pm l_2).
\end{equation*}
Using
\begin{equation*}
    (1+t)^\frac{1}{2}=1+\sum_{v=1}^\infty d_vt^v\quad\big(|t|\leq\frac{1}{2}\big),
\end{equation*}
with $|d_v|<1$, we see
\begin{equation*}
    g_\pm(x)=4\pi\sqrt{x}(\sqrt{n_1}\pm\sqrt{n_2})+4\pi\sum_{v=1}^\infty \frac{d_vh_0^v}{x^{v-\frac{1}{2}}}(\sqrt{n_1}j_1^v\pm\sqrt{n_2}j_2^v)-\theta(h_1\pm h_2,l_1\pm l_2).
\end{equation*}
Noting that $n_1,n_2\leq y\leq \frac{1}{2}Uh_0^{-1}$, we have
\begin{equation*}
    |g'_\pm(x)|\gg \frac{1}{\sqrt{x}}|\sqrt{n_1}\pm\sqrt{n_2}|\quad(n_1\neq n_2).
\end{equation*}
Then by the  the first derivative test we get
\begin{align*}
    \int_U^{2U}F_{221}(x)dx\ll& U\sum_{\begin{subarray}{c}n_1,n_2\leq y\\n_1\neq n_2\end{subarray}}\frac{1}{(n_1n_2)^{\frac{3}{4}}}\sum_{n_1=h_1l_1}\sum_{n_2=h_2l_2}\frac{1}{|\sqrt{n_1}-\sqrt{n_2}|}\\
    =& U\sum_{\begin{subarray}{c}n_1,n_2\leq y\\n_1\neq n_2\end{subarray}}\frac{1}{(n_1n_2)^{\frac{3}{4}}}\frac{d(n_1)d(n_2)}{|\sqrt{n_1}-\sqrt{n_2}|},\\
    \int_U^{2U}F_{222}(x)dx\ll&  U\sum_{\begin{subarray}{c}n_1,n_2\leq y\\n_1\neq n_2\end{subarray}}\frac{1}{(n_1n_2)^{\frac{3}{4}}}\frac{d(n_1)d(n_2)}{|\sqrt{n_1}+\sqrt{n_2}|}.
\end{align*}
Noting $\sum_{n\leq N}d^2(n)\ll N\log^3 N$, by using Lamma \ref{lem:Hil} and \eqref{F220}, we obtain
\begin{align}\label{F22}
    \int_U^{2U}F_{22}(x)dx\ll&U\sum_{\begin{subarray}{c}n_1,n_2\leq y\\n_1\neq n_2\end{subarray}}\frac{1}{(n_1n_2)^{\frac{3}{4}}}\frac{d(n_1)d(n_2)}{|\sqrt{n_1}-\sqrt{n_2}|} \ll U\log^4y.
\end{align}

By the  elementary formulas
\begin{align*}
    \cos u\!-\!\cos v\!=\!-2\sin\big(\frac{u+v}{2}\big)\sin\big(\frac{u\!-\!v}{2}\big),~\text{and}~  \sin(u\!-\!v)=\sin u\cos v\!-\!\cos u\sin v,
\end{align*}
we have
\begin{align}\label{F210}
   F_{21}(x)\!
    =&\frac{2x^\frac{1}{2}}{\pi^2}\!\sum_{n\leq y}\!\frac{1}{n^{\frac{3}{2}}}\sin^2\!\big(\!2\pi \sqrt{n(x\!+\!h_0)}\!-2\pi \sqrt{nx}\big)\\
   \nonumber &\times\Big(\sum_{n=hl}\!\sin\!\big(2\pi \sqrt{n(x\!+\!h_0)}\!+2\pi \sqrt{nx}-\!\theta(h,l)\!-\!\frac{\pi}{4}\big)\Big)^2,\\
   \nonumber =&:F_{211}+F_{212}+F_{213},
\end{align}
where
\begin{align*}
    F_{211}=&\frac{2x^\frac{1}{2}}{\pi^2}\!\sum_{n\leq y}\!\frac{1}{n^{\frac{3}{2}}}\sin^2\!\big(\!2\pi \sqrt{n(x\!+\!h_0)}\!-2\pi \sqrt{nx}\big)\\
    &\times\sin^2\!\big(\!2\pi \sqrt{n(x\!+\!h_0)}\!+2\pi \sqrt{nx}\big)\Big(\sum_{n=hl}\!\cos\!\big(\theta(h,l)\!+\!\frac{\pi}{4}\big)\Big)^2,
\end{align*}
\begin{align*}
    \hspace*{4.1em}F_{212}=&\frac{2x^\frac{1}{2}}{\pi^2}\!\sum_{n\leq y}\!\frac{1}{n^{\frac{3}{2}}}\sin^2\!\big(\!2\pi \sqrt{n(x\!+\!h_0)}\!-2\pi \sqrt{nx}\big)\\
    &\times\cos^2\!\big(\!2\pi \sqrt{n(x\!+\!h_0)}\!+2\pi \sqrt{nx}\big)\Big(\sum_{n=hl}\!\sin\!\big(\theta(h,l)\!+\!\frac{\pi}{4}\big)\Big)^2,\\
    F_{213}=&-\frac{2x^\frac{1}{2}}{\pi^2}\!\sum_{n\leq y}\!\frac{1}{n^{\frac{3}{2}}}\sin^2\!\big(\!2\pi \sqrt{n(x\!+\!h_0)}\!-2\pi \sqrt{nx}\big)\sin\!\big(\!4\pi \sqrt{n(x\!+\!h_0)}\!+4\pi \sqrt{nx}\big)\\
    &\times\sum_{n=hl}\!\sin\!\big(\theta(h,l)\!+\!\frac{\pi}{4}\big) \!\sum_{n=h'l'}\cos\!\big(\theta(h',l')\!+\!\frac{\pi}{4}\big).
\end{align*}
It is easy to see that
\begin{align*}
    0\leq F_{211}+F_{212}
    \leq\frac{2x^\frac{1}{2}}{\pi^2}\!\sum_{n\leq y}\!\frac{1}{n^{\frac{3}{2}}}\sin^2\!\big(\!2\pi \sqrt{n(x\!+\!h_0)}\!-2\pi \sqrt{nx}\big)d^2(n).
\end{align*}
By using Taylor's expansion, we have for $x\geq 100 h_0^2$,
\begin{align*}
    \sin^2\big(2\pi\sqrt{n(x\!+\!h_0)}\!-\!2\pi\sqrt{nx}\big)=&\sin^2\big(\pi h_0n^{\frac{1}{2}}x^{-\frac{1}{2}}+O(h_0^2n^\frac{1}{2}x^{-\frac{3}{2}})\big)\\
    =&\sin^2\big(\pi h_0n^{\frac{1}{2}}x^{-\frac{1}{2}}\big) +O(h_0^2n^\frac{1}{2}x^{-\frac{3}{2}}).
\end{align*}
which suggests
\begin{align*}
    &\int_U^{2U}x^\frac{1}{2}\sin^2\!\big(\!2\pi \sqrt{n(x\!+\!h_0)}\!-2\pi \sqrt{nx}\big)dx\\
    \ll&\int_U^{2U}x^\frac{1}{2}\min\big(1, h_0^2nx^{-1}\big)+O(h_0^2n^\frac{1}{2}x^{-1})dx
    \ll\left\{\begin{array}{ll}U^\frac{1}{2}h_0^2n,&n\leq Uh_0^{-2},\\U^{\frac{3}{2}},&n> Uh_0^{-2},\end{array}\right.
\end{align*}
in view of the fact $h_0^2<U$ and $n\leq y<U$. Hence,
\begin{align}\label{F2112}
 \int_U^{2U}\!\!F_{211}\!+\!F_{212}dx\!\ll h_0^2 U^\frac{1}{2}\!\!\sum_{n\leq Uh_0^{-2}}\!\frac{d^2(n)}{n^{\frac{1}{2}}}\!+U^{\frac{3}{2}}\!\!\sum_{n> Uh_0^{-2}}\!\frac{d^2(n)}{n^{\frac{3}{2}}}\!\ll U h_0\log^3\!\frac{\sqrt{U}}{h_0},
\end{align}
where we used the well-known estimate $\sum_{n\leq N}d^2(n)\ll N\log^3N$.

 By the first derivative test, we have
 \begin{equation*}
    L_n(t):=\int_U^tx^\frac{1}{2} \sin\!\big(\!4\pi \sqrt{n(x\!+\!h_0)}\!+4\pi \sqrt{nx}\big)dx \ll Un^{-\frac{1}{2}},\quad U\leq t\leq2U.
 \end{equation*}
Using the integration by parts, we obtain
\begin{align*}
    &\int_U^{2U}\!\!\!x^\frac{1}{2} \sin^2\big(2\pi\sqrt{n(x\!+\!h_0)}\!-\!2\pi\sqrt{nx}\big)\sin\!\big(\!4\pi \sqrt{n(x\!+\!h_0)}\!+4\pi \sqrt{nx}\big)dx\\
    =&\int_U^{2U}\!\!\!\sin^2\big(2\pi\sqrt{n(x\!+\!h_0)}\!-\!2\pi\sqrt{nx}\big)dL_n(x)\\
    =&L_n(2U)\sin^2\big(2\pi\sqrt{n(2U\!+\!h_0)}\!-\!2\pi\sqrt{2nU}\big)- 2\int_U^{2U}\!L_n(x) \\
    &\times\sin\big(2\pi\sqrt{n(x\!+\!h_0)}\!-\!2\pi\sqrt{nx}\big)\cos\big(2\pi\sqrt{n(x\!+\!h_0)}\!-\!2\pi\sqrt{nx}\big) \big(\frac{\pi\sqrt{n}}{\sqrt{x\!+\!h_0}} \!-\!\frac{\pi\sqrt{n}}{\sqrt{x}}\big)dx\\
    \ll&Un^{-\frac{1}{2}}+U^{\frac{1}{2}}h_0,
\end{align*}
which yields
\begin{align}\label{F213}
    \int_U^{2U}\!\!F_{213}dx\!=\!&-\!\frac{2}{\pi^2}\!\sum_{n\leq y}\!\frac{1}{n^{\frac{3}{2}}}\sum_{n=hl}\!\sin\!\big(\theta(h,l)\!+\!\frac{\pi}{4}\big) \!\sum_{n=h'l'}\cos\!\big(\theta(h',l')\!+\!\frac{\pi}{4}\big)\int_U^{2U}\!\!\!x^\frac{1}{2}\\
    \nonumber &\times\sin^2\!\big(\!2\pi \sqrt{n(x\!+\!h_0)}\!-\!2\pi \sqrt{nx}\big)\sin\!\big(\!4\pi \sqrt{n(x\!+\!h_0)}\!+\!4\pi \sqrt{nx}\big)dx\\  \nonumber\ll&\sum_{n\leq y}\!\frac{d^2(n)}{n^{\frac{3}{2}}}(Un^{-\frac{1}{2}}+U^{\frac{1}{2}}h_0)\ll U.
\end{align}

From \eqref{F210}-\eqref{F213}, we get
\begin{equation}\label{F21}
    \int_U^{2U}F_{21}(x)dx\ll U h_0\log^3\!\frac{\sqrt{U}}{h_0}.
\end{equation}

Combining \eqref{F20}, \eqref{F22} and \eqref{F21}, we obtain
\begin{equation*}
    \int_U^{2U}F_{2}^2(x)dx\ll U h_0\log^3\!\frac{\sqrt{U}}{h_0}+U\log^4y,
\end{equation*}
which together with \eqref{dR0}, \eqref{F1} yields
\begin{equation}\label{dR}
    \int_U^{2U}\big(R_{0}(x+h; y)-R_{0}(x; y)\big)^2(x)dx\ll U h_0\log^3\!\frac{\sqrt{U}}{h_0}+U\log^4y.
\end{equation}

From \eqref{J}, \eqref{s/r}, and \eqref{dR}, it follows that
\begin{equation*}
    J(U,h_0)\ll U h_0\log^3\!\frac{\sqrt{U}}{h_0}+U\log^6y,
\end{equation*}
which implies
\begin{equation}\label{int2}
    \int_2\ll T h_0\log^3\!\frac{\sqrt{T}}{h_0}+T\mathcal{L}^6,
\end{equation}
via a splitting argument. Then Lemma \ref{lem:meanvalue} follows from \eqref{I}, \eqref{int1}, and \eqref{int2}.
\end{proof}
\vspace{2ex}

\section{Proof of Theorem \ref{th:maintain}}

In this section, we will give a proof of Theorem \ref{th:maintain} by following the approach of \cite{tsang2012sign}. We still write $\Delta^{*}(q_1q_2x)=\Delta(q_1q_2x; r_1, q_1, r_2, q_2)$. Define
\begin{align*}
    \Delta^{*}_+(t)=&\frac{1}{2}\big(|\Delta^{*}(t)|+\Delta^{*}(t)\big),&
    \Delta^{*}_-(t)=&\frac{1}{2}\big(|\Delta^{*}(t)|-\Delta^{*}(t)\big).
\end{align*}
We need the following two lemmas.
\begin{lemma}\label{lem:3}
 \begin{equation*}
    \int_T^{2T}{\Delta^{*}}^2_\pm(q_1q_2t)dt\gg T^\frac{3}{2}.
 \end{equation*}
\end{lemma}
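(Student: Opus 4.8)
The plan is to deduce Lemma~\ref{lem:3} from the moment data already at our disposal, namely the $k=3$ and $k=4$ cases of \eqref{sk} (furnished by Corollary~\ref{cor:1}) together with the first--moment formula \eqref{muller}; everything else is two applications of the Cauchy--Schwarz inequality and one $L^{1}$--$L^{2}$--$L^{4}$ interpolation. Throughout, all integrals run over $[T,2T]$, and I abbreviate $\Delta^{*}=\Delta^{*}(q_1q_2t)$ and $\Delta^{*}_{\pm}=\Delta^{*}_{\pm}(q_1q_2t)$, so that $\Delta^{*}=\Delta^{*}_{+}-\Delta^{*}_{-}$ and $0\le\Delta^{*}_{\pm}\le|\Delta^{*}|$. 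Subtracting the cases $[1,2T]$ and $[1,T]$ of \eqref{sk} and invoking Corollary~\ref{cor:1} one obtains
\begin{equation*}
  \int_T^{2T}\Delta^{*4}\,dt\ \asymp\ T^{2},\qquad \int_T^{2T}\Delta^{*3}\,dt\ \asymp\ T^{7/4},
\end{equation*}
the cubic moment carrying the fixed sign of the nonzero constant $C_3$; in particular $\int_T^{2T}(\Delta^{*}_{\pm})^{4}\,dt\ll T^{2}$ as well, since $0\le\Delta^{*}_{\pm}\le|\Delta^{*}|$. (It is essential here that this fourth moment be free of any $\mathcal L$-power, which is why I use Corollary~\ref{cor:1} rather than \eqref{pre}.) Finally, the scaling $x\mapsto q_1q_2x$ turns \eqref{muller} into
\begin{equation*}
  \int_T^{2T}\Delta^{*}\,dt=\Big(\tfrac{r_1}{q_1}-\tfrac12\Big)\Big(\tfrac{r_2}{q_2}-\tfrac12\Big)T+O\big(T^{3/4}\big)\ \ll\ T .
\end{equation*}

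Interchanging the labels $+$ and $-$ if necessary, I may assume $\int_T^{2T}\Delta^{*3}\,dt\gg T^{7/4}$. Since $(\Delta^{*}_{-})^{3}\ge0$ and $\Delta^{*3}=(\Delta^{*}_{+})^{3}-(\Delta^{*}_{-})^{3}$, this gives $\int_T^{2T}(\Delta^{*}_{+})^{3}\,dt\ge\int_T^{2T}\Delta^{*3}\,dt\gg T^{7/4}$. Cauchy--Schwarz in the shape $\int(\Delta^{*}_{+})^{3}\le\big(\int(\Delta^{*}_{+})^{2}\big)^{1/2}\big(\int(\Delta^{*}_{+})^{4}\big)^{1/2}$, combined with $\int(\Delta^{*}_{+})^{4}\ll T^{2}$, then yields
\begin{equation*}
  \int_T^{2T}(\Delta^{*}_{+})^{2}\,dt\ \gg\ \frac{\big(T^{7/4}\big)^{2}}{T^{2}}\ =\ T^{3/2}.
\end{equation*}
Next I convert this into an $L^{1}$ bound: the interpolation inequality $\|f\|_{2}\le\|f\|_{1}^{1/3}\|f\|_{4}^{2/3}$ (H\"older), applied to $f=\Delta^{*}_{+}\ge0$ on $[T,2T]$, rearranges to $\int_T^{2T}\Delta^{*}_{+}\,dt\gg\big(\int(\Delta^{*}_{+})^{2}\big)^{3/2}\big(\int(\Delta^{*}_{+})^{4}\big)^{-1/2}\gg(T^{3/2})^{3/2}(T^{2})^{-1/2}=T^{5/4}$.

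The last step carries the estimate over to the other sign by means of the smallness of $\int\Delta^{*}$. From $\Delta^{*}_{-}=\Delta^{*}_{+}-\Delta^{*}$ we get $\int_T^{2T}\Delta^{*}_{-}\,dt=\int_T^{2T}\Delta^{*}_{+}\,dt-\int_T^{2T}\Delta^{*}\,dt\gg T^{5/4}-O(T)\gg T^{5/4}$, and of course $\int_T^{2T}\Delta^{*}_{+}\,dt\gg T^{5/4}$ too. Finally Cauchy--Schwarz in the crude form $\big(\int_T^{2T}\Delta^{*}_{\pm}\,dt\big)^{2}\le T\int_T^{2T}(\Delta^{*}_{\pm})^{2}\,dt$ (the factor $T$ being the length of $[T,2T]$) gives $\int_T^{2T}(\Delta^{*}_{\pm})^{2}\,dt\gg T^{-1}\big(T^{5/4}\big)^{2}=T^{3/2}$ for both choices of sign, which is exactly the assertion of the lemma.

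I expect the only genuinely non-formal point to be the asymmetry between the two signs: the even moments $\int\Delta^{*2}$, $\int\Delta^{*4}$ control $\Delta^{*}_{+}$ and $\Delta^{*}_{-}$ only through their sum, so to pry them apart one must use the extra information carried by an odd moment. Here the cubic moment, whose constant $C_3$ is nonzero, certifies that one of the two parts is $L^{2}$-large; the fact that $\int\Delta^{*}\ll T=o(T^{5/4})$ then lets one transport the resulting $L^{1}$ lower bound across to the other part, and the two Cauchy--Schwarz applications do the rest. One could equally well start from a mean-square lower bound $\int_T^{2T}\Delta^{*2}\,dt\gg T^{3/2}$ (the scaled form of \eqref{muller2}) together with the pigeonhole principle in place of the cubic moment; the subsequent steps are unchanged.
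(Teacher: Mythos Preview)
Your proof is correct. The paper's argument follows the alternative you sketch in your closing remark almost verbatim: it uses the $k=2$ and $k=4$ cases of Corollary~\ref{cor:1} (rather than $k=3$ and $k=4$) together with H\"older in the form $\int\Delta^{*2}\le\big(\int|\Delta^{*}|\big)^{2/3}\big(\int\Delta^{*4}\big)^{1/3}$ to obtain $\int_T^{2T}|\Delta^{*}|\,dt\gg T^{5/4}$ directly, then splits $|\Delta^{*}|=\Delta^{*}_{+}+\Delta^{*}_{-}$ and invokes \eqref{muller} to get $\int_T^{2T}\Delta^{*}_{\pm}\,dt\gg T^{5/4}$ for \emph{both} signs simultaneously, before the final Cauchy--Schwarz. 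Your route via the cubic moment reaches the same $L^{1}$ lower bound but asymmetrically---first for one sign, then transported to the other---at the cost of one extra interpolation step; the paper's use of the even moment $k=2$ treats both signs at once and is marginally shorter. Either way the engine is the same: a sharp $L^{4}$ upper bound, an $L^{p}$ lower bound for some $p>1$, and the smallness of $\int\Delta^{*}$ to separate the two halves.
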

\begin{proof}
From Corollary \ref{cor:1} with $k=2,4$, by H\"{o}lder's inequality, we get
\begin{align*}
    T^\frac{3}{2}\ll\int_T^{2T}{\Delta^{*}}^2(q_1q_2t)dt \ll&\Big(\int_T^{2T}|\Delta^{*}(q_1q_2t)|dt\Big)^\frac{2}{3}\Big(\int_T^{2T}{\Delta^{*}}^4(q_1q_2t)dt\Big)^\frac{1}{3}\\
    \ll&\Big(\int_T^{2T}|\Delta^{*}(q_1q_2t)|dt\Big)^\frac{2}{3}T^\frac{2}{3},
\end{align*}
which yields
\begin{equation}\label{31}
    \int_T^{2T}|\Delta^{*}(q_1q_2t)|dt\gg T^\frac{5}{4}.
\end{equation}
From \eqref{muller}, we see
\begin{equation*}
        \int_T^{2T}\Delta^{*}(q_1q_2t)dt\ll T^\frac{3}{4}.
\end{equation*}
Thus, from the definition of $\Delta^{*}_\pm(q_1q_2t)$, we have
\begin{equation*}
    \int_T^{2T}\Delta^{*}_\pm(q_1q_2t)dt\gg T^\frac{5}{4}.
\end{equation*}
Then by  Cauchy-Schwarz's inequality, we get
\begin{align*}
    T^\frac{5}{4}\ll\Big(\int_T^{2T}dt\Big)^\frac{1}{2} \Big(\int_T^{2T}{\Delta^{*}}^2_\pm(q_1q_2t)dt\Big)^\frac{1}{2}\ll T^\frac{1}{2}\Big(\int_T^{2T}{\Delta^{*}}^2_\pm(q_1q_2t)dt\Big)^\frac{1}{2},
\end{align*}
which immediately implies Lemma \ref{lem:3}.
\end{proof}

\begin{lemma}\label{lem:4}
Suppose $2\leq H_0\leq\sqrt{T}$. Then
\begin{equation*}
    \int_T^{2T}\max_{h\leq H_0}\big(\Delta^{*}_\pm(q_1q_2(t+h))-\Delta^{*}_\pm(q_1q_2t)\big)^2 dt\ll H_0T\mathcal{L}^7.
\end{equation*}
\end{lemma}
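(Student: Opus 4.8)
The plan is to reduce the estimate for $\Delta^{*}_\pm$ to the corresponding second-moment estimate for the short-interval difference of $\Delta^{*}$ itself, namely Lemma \ref{lem:meanvalue}, via the elementary observation that the maps $t\mapsto\tfrac12(|t|+t)$ and $t\mapsto\tfrac12(|t|-t)$ are Lipschitz with constant $1$. Concretely, for any reals $a,b$ one has $|\Delta^{*}_\pm(a)-\Delta^{*}_\pm(b)|\leq|\Delta^{*}(a)-\Delta^{*}(b)|$, so that
\begin{equation*}
    \max_{h\leq H_0}\big(\Delta^{*}_\pm(q_1q_2(t+h))-\Delta^{*}_\pm(q_1q_2t)\big)^2\leq\max_{h\leq H_0}\big(\Delta^{*}(q_1q_2(t+h))-\Delta^{*}(q_1q_2t)\big)^2.
\end{equation*}
Thus it suffices to prove the same bound with $\Delta^{*}_\pm$ replaced by $\Delta^{*}$, and from now on I work with $\Delta^{*}$.

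The main obstacle is the presence of the maximum over $h\leq H_0$ inside the integral: Lemma \ref{lem:meanvalue} only controls $\int_1^T(\Delta^{*}(q_1q_2(x+h_0))-\Delta^{*}(q_1q_2x))^2dx$ for a \emph{fixed} shift $h_0$. To handle the supremum I would use the standard device of writing the difference over a variable shift $h$ as a telescoping sum over a dyadic (or unit) decomposition of $[0,h]$ plus a short "tail" of length $<1$, together with a Sobolev-type inequality to bound $\sup_{|\tau|\le1}|g(t+\tau)|^2$ by $\int|g|^2+\int|g'|^2$ over a unit window. More precisely, for integer shifts one writes
\begin{equation*}
    \Delta^{*}(q_1q_2(t+h))-\Delta^{*}(q_1q_2t)=\sum_{j=0}^{h-1}\big(\Delta^{*}(q_1q_2(t+j+1))-\Delta^{*}(q_1q_2(t+j))\big),
\end{equation*}
so by Cauchy--Schwarz the square is $\le h\sum_{j=0}^{H_0-1}(\Delta^{*}(q_1q_2(t+j+1))-\Delta^{*}(q_1q_2(t+j)))^2\le H_0\sum_{j=0}^{H_0-1}(\cdots)^2$; integrating in $t$ over $[T,2T]$ and applying Lemma \ref{lem:meanvalue} with $h_0=1$ to each of the $H_0$ terms gives $\ll H_0\cdot H_0\cdot(T\mathcal{L}^6)\ll H_0^2T\mathcal{L}^6$, which is too weak by a factor $H_0$. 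To recover the sharp bound one must instead apply Lemma \ref{lem:meanvalue} for each dyadic shift $h_0=2^k\le H_0$: decomposing $h$ in binary as $h=\sum_{k}\epsilon_k2^k$ with $\epsilon_k\in\{0,1\}$, the difference telescopes into at most $\log_2 H_0$ blocks each of the form $\Delta^{*}(q_1q_2(t'+2^k))-\Delta^{*}(q_1q_2 t')$ with $t'\in[T,2T+H_0]$; Cauchy--Schwarz over these $O(\mathcal{L})$ blocks, integration in $t$, and Lemma \ref{lem:meanvalue} with $h_0=2^k\le H_0\le\sqrt T$ (giving $\ll 2^kT\log^3(\sqrt T/2^k)+T\mathcal{L}^6\ll H_0T\mathcal{L}^3+T\mathcal{L}^6$ for each block after a further sum over the $O(\sqrt T/2^k)$ shifted copies needed to cover $[T,2T]$) yield a total of $\ll\mathcal{L}\cdot\mathcal{L}\cdot(H_0T\mathcal{L}^3+T\mathcal{L}^6)$, and absorbing the logarithms one arrives at $\ll H_0T\mathcal{L}^7$, with one further factor of $\mathcal{L}^2$ coming from the Sobolev step used to pass from integer shifts to the genuine supremum over real $h\le H_0$ (the derivative $\tfrac{d}{dt}\Delta^{*}(q_1q_2 t)$ is estimated trivially from the Voronoi formula \eqref{vor0} on a short window, contributing at most a bounded power of $\mathcal{L}$).

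I would organize the write-up as follows: first record the Lipschitz reduction to $\Delta^{*}$; second, state and prove the unit-window Sobolev inequality $\sup_{|\tau|\le1}|g(t+\tau)|^2\ll\int_{t-1}^{t+1}(|g(s)|^2+|g'(s)|^2)ds$ and apply it with $g(s)=\Delta^{*}(q_1q_2(s+h))-\Delta^{*}(q_1q_2 s)$ viewed as a function of $h$, so that the supremum over $h\le H_0$ is bounded by an integral over $h\in[0,H_0+1]$ of the square plus the square of the $h$-derivative; third, for the square term, split $[0,H_0]$ dyadically and invoke Lemma \ref{lem:meanvalue} for each dyadic length, summing the $O(\mathcal{L})$ contributions by Cauchy--Schwarz; fourth, bound the derivative term by differentiating the Voronoi series term-by-term (each differentiation of $\cos(4\pi\sqrt{n}t)$ costs $O(\sqrt n)$, harmless since $n\ll T$), reducing it again to mean squares controlled by Lemma \ref{lem:meanvalue}. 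The hard part is purely bookkeeping: making sure the dyadic decomposition of the shift interacts correctly with the splitting of $[T,2T]$ into $O(\sqrt T/2^k)$ blocks of length $2^k$ so that the final power of $\mathcal{L}$ is exactly $7$ and not larger; this is exactly the mechanism already used by Tsang and Zhai \cite{tsang2012sign}, so I would follow their counting verbatim.
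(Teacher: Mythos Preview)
Your Lipschitz reduction from $\Delta^{*}_\pm$ to $\Delta^{*}$ and your dyadic decomposition of the shift $h$ are exactly what the paper does (the latter is borrowed verbatim from Lemma~2 of \cite{heathbrown1994sign}). The point of departure is your proposed passage from the supremum over real $h\le H_0$ to a maximum over a discrete grid, and here your plan breaks down.

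You propose a Sobolev inequality $\sup_{|\tau|\le1}|g(t+\tau)|^2\ll\int(|g|^2+|g'|^2)$ applied to $g(h)=\Delta^{*}(q_1q_2(t+h))-\Delta^{*}(q_1q_2 t)$, with the derivative term ``estimated trivially from the Voronoi formula''. But $\Delta^{*}(q_1q_2 x)$ is a step function minus a smooth function: $D(x;r_1,q_1,r_2,q_2)$ jumps by a positive integer at every $x$ with $q_1q_2 x=n_1n_2$, $n_i\equiv r_i\pmod{q_i}$. So $g'(h)$ does not exist in the classical sense, and in the distributional sense it contains a sum of Dirac masses, making $\int|g'|^2$ infinite. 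The Voronoi formula \eqref{vor0} does not rescue this: the non-smoothness is carried by the $G_{12},G_{21}$ terms and the $O(\log^3)$ error, and even for the smooth pieces $R_{12},R_{21}$ your claim ``$n\ll T$'' is false (with $H=T$ one has $n\le 2^{J+1}H^2$, which is of order $T^3$, so term-by-term differentiation costs $\sqrt n\gg T^{3/2}$, not a power of $\mathcal{L}$).

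The paper replaces this step by a one-line observation that encodes exactly the non-decreasing nature of $D$: for $0<u_1<u_2\ll T$,
\[
\Delta^{*}(q_1q_2 u_2)-\Delta^{*}(q_1q_2 u_1)\ \ge\ -O\big((u_2-u_1)\log T\big),
\]
since the main term in \eqref{4} has derivative $O(\log T)$. This one-sided Lipschitz bound lets one sandwich the continuous supremum between values at the grid points $t+jb$, $0\le j\le 2^\lambda$ (where $H_0=2^\lambda b$, $1\le b<2$), up to an additive $O(\mathcal{L})$. After that the dyadic Heath-Brown--Tsang counting you sketched, combined with Lemma~\ref{lem:meanvalue}, gives
\[
I\ll \lambda\sum_{\mu\le\lambda}\sum_{0\le\nu\le 2^\mu}\Big(2^{\lambda-\mu}b\,T\mathcal{L}^3+T\mathcal{L}^6\Big)\ll H_0T\mathcal{L}^7,
\]
exactly as you anticipated. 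So your outline is essentially correct once the Sobolev device is swapped for the near-monotonicity argument.
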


\begin{proof}
Since
\begin{equation*}
    |\Delta^{*}_\pm(q_1q_2(t+h))-\Delta^{*}_\pm(q_1q_2t)|\leq |\Delta^{*}(q_1q_2(t+h))-\Delta^{*}(q_1q_2t)|,
\end{equation*}
it is sufficient to prove that
\begin{equation*}
   I= \int_T^{2T}\max_{h\leq H_0}\big(\Delta^{*}(q_1q_2(t+h))-\Delta^{*}(q_1q_2t)\big)^2 dt\ll H_0T\mathcal{L}^7.
\end{equation*}

For $0<u_1<u_2\ll T$, it easy to see that
\begin{equation*}
    \Delta^{*}(q_1q_2u_2)-\Delta^{*}(q_1q_2u_1)\geq - O\big((u_2-u_1)\log T\big).
\end{equation*}
Write $H_0=2^\lambda b$, such that $\lambda\in \mathbb{N}$ and $1\leq b <2$. Then for each $t\in[T, 2T]$, we have
 \begin{equation*}
    \max_{h\leq H_0}\big|\Delta^{*}(q_1q_2(t+h))-\Delta^{*}(q_1q_2t)\big|\ll \max_{1\leq j\leq 2^\lambda}\big|\Delta^{*}(q_1q_2(t+jb))-\Delta^{*}(q_1q_2t)\big|+\mathcal{L}.
 \end{equation*}
Similar to the argument of the proof of Lemma 2
of \cite{heathbrown1994sign}, by using Lemma \ref{lem:meanvalue}, we we can deduce that
\begin{align*}
    I\ll& \lambda\sum_{\mu\leq\lambda}\sum_{0\leq\nu\leq2^\mu} \int_{T+\nu2^{\lambda-\mu}b}^{2T+\nu2^{\lambda-\mu}b} \big(\Delta^{*}(q_1q_2(t+2^{\lambda-\mu}b))-\Delta^{*}(q_1q_2t)\big)^2dt\!+\! T\mathcal{L}^2\\
    \ll& \lambda\sum_{\mu\leq\lambda}\sum_{0\leq\nu\leq2^\mu} \big( 2^{\lambda-\mu}bT\mathcal{L}^3+ T\mathcal{L}^6\big)\\
    \ll& \lambda\sum_{\mu\leq\lambda}\big( 2^{\lambda}bT\mathcal{L}^3+ 2^\mu T\mathcal{L}^6\big)\\
    \ll& \lambda^2 H_0T\mathcal{L}^3+\lambda H_0T\mathcal{L}^6\\
    \ll&  H_0T\mathcal{L}^7.
\end{align*}
Thus we get Lemmma \ref{lem:4}.\end{proof}

Now we finish the proof of Theorem \ref{th:maintain}.
Let $P(t)=\Delta^{*}_\pm(q_1q_2t)$ and $Q(t)=\delta t^\frac{1}{4}$ for a  sufficiently small $\delta>0$, and
\begin{align*}
    \omega(t)=P^2(t)-4\max_{h\leq H_0}\big(P(t+h)-P(t)\big)^2-Q^2(t).
\end{align*}
Then
\begin{align}\label{35}
    \int_T^{2T}\!\!\!\!\omega(t)dt\gg T^\frac{3}{2}\!-\!O\big( H_0T\mathcal{L}^7\big)\! -\!O\big(\delta^2 T^\frac{3}{2}\big)\!\gg\!  T^\frac{3}{2},
\end{align}
from Lemma \ref{lem:3} and Lemma \ref{lem:4}, by taking $H_0=\delta T^\frac{1}{2}\mathcal{L}^{-7}$.
For any point $t_0$, where $\omega(t_0)>0$ and any $h\in [0, H_0]$, we see that $P(t_0+h)$  has the same sign as $P(t_0)$, and $|P(t_0+h)|>\frac{1}{2}|Q(t_0)|$.

 Let $$\mathscr{S}=\{t\in[T,2T]:\omega(t)>0\}.$$ From Corollary \ref{cor:1} and \eqref{35}, using Cauchy-Schwarz's inequality, we have
\begin{align*}
     T^\frac{3}{2}\ll & \int_T^{2T}\omega(t)dt\leq \int_\mathscr{S}\omega(t)dt \leq \int_\mathscr{S}{\Delta^{*}}^2_\pm(q_1q_2t)dt \\ \leq&|\mathscr{S}|^\frac{1}{2}\Big(\int_T^{2T}{\Delta^{*}}^4(q_1q_2t)dt\Big)^\frac{1}{2} \ll |\mathscr{S}|^\frac{1}{2} T,
\end{align*}
which implies
\begin{equation*}
    |\mathscr{S}|\gg T.
\end{equation*}
Thus the proof of Theorem \ref{th:maintain} is completed.
\qed

\section{Proof of Theorem \ref{th:omega}}

Suppose $k\geq3$ is a fixed odd integer and $T\gg (q_1q_2)^{\varepsilon}$ is a large parameter.
Set
\begin{equation*}
    \delta=\left\{\begin{array}{ll}-1,&\quad\text{if } C_k\geq0,\\1,&\quad\text{if } C_k<0,\end{array}\right.
\end{equation*}
where $C_k$ is defined in \eqref{sk}.

By Theorem \ref{th:maintain}, there exists $t\in[T,2T]$ such that $\delta \Delta(q_1q_2u; r_1, q_1, r_2, q_2)> c_5 t^\frac{1}{4}$ for any $u\in[t,t+H_0]$, with $H_0=c_4\sqrt{T}\mathcal{L}^{-7}$. Thus
\begin{align*}
    &c^k_5 H_0 t^\frac{k}{4}<\int_t^{t+H_0}\delta^k\Delta^k(q_1q_2u; r_1, q_1, r_2, q_2)du\\
    =&\delta^kC_k\big((t\!+\!\!H_0)^{1\!+\!\frac{k}{4}}\!\!-\!t^{1\!+\!\frac{k}{4}}\big)\!\!+\!\delta^k\!\big( \mathcal{F}_k\big(q_1q_2(t\!+\!\!H_0); r_1, q_1, r_2, q_2\big)\!\!- \!\!\mathcal{F}_k(q_1q_2t; r_1, q_1, r_2, q_2)\big),
\end{align*}
which yields
\begin{align*}
    &\delta^k\Big( \!\mathcal{F}_k\big(q_1q_2(t\!+\!H_0); r_1, q_1, r_2, q_2\big)\!\!- \! \mathcal{F}_k\big(q_1q_2t; r_1, q_1, r_2, q_2\big)\Big)\\
    >&c^k_5 H_0 t^\frac{k}{4}-\delta^kC_k\big(1\!+\!\frac{k}{4}\big)t^{\frac{k}{4}}H_0\!\!+\!O(H_0^2t^{\frac{k}{4}\!-\!1})
    =C_k^*H_0t^{\frac{k}{4}}\big(1+O(H_0T^{-1})\big),
\end{align*}
with
\begin{equation*}
    C_k^*=c^k_5-\delta^kC_k\big(1\!+\!\frac{k}{4}\big)>0.
\end{equation*}
Thus we get
\begin{equation*}
    \big|\mathcal{F}_k\big(q_1q_2(t\!+\!H_0);r_1, q_1, r_2, q_2\big)\!\!- \! \mathcal{F}_k\big(q_1q_2t; r_1, q_1, r_2, q_2\big)\big|\gg H_0T^{\frac{k}{4}},
\end{equation*}
which immediatly implies Theorem \ref{th:omega}.
\qed

\vspace{5ex}

\bibliography{jia2}

\begin{thebibliography}{10}

\bibitem{cramer1922zwei}
H.~Cram{\'e}r.
\newblock {\"U}ber zwei {S}{\"a}tze des {H}errn {G}. {H}. {H}ardy.
\newblock {\em Mathematische Zeitschrift}, 15(1):201--210, 1922.

\bibitem{good1977einomega}
A.~Good.
\newblock Ein $\omega$-{R}esult at f{\"u}r das quadratische {M}ittel der
  {R}iemannschen {Z}etafunktion auf der kritischen {L}inie.
\newblock {\em Inventiones mathematicae}, 41(3):233--251, 1977.

\bibitem{heath1992distribution}
D.~R. Heath-Brown.
\newblock The distribution and moments of the error term in the {D}irichlet
  divisor problem.
\newblock {\em Acta Arith}, 60(4):389--415, 1992.

\bibitem{heathbrown1994sign}
D.~R. Heathbrown and K.~Tsang.
\newblock Sign changes of {E} ({T}), $\delta$ (x), and {P} (x).
\newblock {\em Journal of Number Theory}, 49(1):73--83, 1994.

\bibitem{Huxley03}
M.~N. Huxley.
\newblock Exponential sums and lattice points {III}.
\newblock {\em Proceedings of the London Mathematical Society},
  87(03):591--609, 2003.

\bibitem{huxley2005exponential}
M.~N. Huxley.
\newblock Exponential sums and the {Riemann} zeta function {V}.
\newblock {\em Proceedings of the London Mathematical Society}, 90(01):1--41,
  2005.

\bibitem{ivic1983large}
A.~Ivi{\'c}.
\newblock Large values of the error term in the divisor problem.
\newblock {\em Inventiones mathematicae}, 71(3):513--520, 1983.

\bibitem{ivic2007higher}
A.~Ivi{\'c} and P.~Sargos.
\newblock On the higher moments of the error term in the divisor problem.
\newblock {\em Illinois Journal of Mathematics}, 51(2):353--377, 2007.

\bibitem{Jiazhai2017}
L.~R. Jia and W.~G. Zhai.
\newblock A weighted divisor problem.
\newblock {\em Journal of Number Theory}, 178:60 -- 93, 2017.

\bibitem{jutila1984divisor}
M.~Jutila.
\newblock On the divisor problem for short intervals.
\newblock {\em Ann. Univer. Turkuensis Ser. AI}, 186:23--30, 1984.

\bibitem{LiuKui}
K.~Liu.
\newblock On higher-power moments of the error term for the divisor problem
  with congruence conditions.
\newblock {\em Monatshefte f{\"u}r Mathematik}, 163(2):175--195, 2011.

\bibitem{MullerNowak}
W.~M{\"u}ller and W.~G. Nowak.
\newblock Third power moments of the error terms corresponding to certain
  arithmetic functions.
\newblock {\em manuscripta mathematica}, 87(1):459--480, 1995.

\bibitem{Richert}
H.~E. Richert.
\newblock Ein {G}itterpunktproblem.
\newblock {\em Mathematische Annalen}, 125(1):467--471, 1952.

\bibitem{shan}
Z.~Shan.
\newblock Hilbert inequality (in {C}hinese).
\newblock {\em Chinese Science Bulletin}, 29(01):021, 1984.

\bibitem{tsang1992higher}
K.~M. Tsang.
\newblock Higher-power moments of $\delta(x)$, ${E}(t)$ and ${P}(x)$.
\newblock {\em Proceedings of the London Mathematical Society}, 65:65--84,
  1992.

\bibitem{tsang2012sign}
Kai-Man Tsang and Wenguang Zhai.
\newblock Sign changes of the error term in {W}eyl's law for {H}eisenberg
  manifolds.
\newblock {\em Transactions of the American Mathematical Society},
  364(5):2647--2666, 2012.

\bibitem{voronoi1904fonction}
G.~Vorono{\"\i}.
\newblock Sur une fonction transcendante et ses applications {\`a} la sommation
  de quelques s{\'e}ries.
\newblock In {\em Annales scientifiques de l'{\'E}cole Normale Sup{\'e}rieure},
  volume~21, pages 207--267, 1904.

\bibitem{zhai04}
W.~G. Zhai.
\newblock On higher-power moments of ${\Delta}(x)$({II}).
\newblock {\em Acta Arith}, 114:35--54, 2004.

\end{thebibliography}
\bibliographystyle{plain}

\end{document}